\newtheorem{theorem}{Theorem}
\newtheorem*{theorem:repeatmain}{\tref{main}}
\newtheorem{lemma}[theorem]{Lemma}
\newtheorem{problem}[theorem]{Problem}
\newtheorem{prop}[theorem]{Proposition}
\newtheorem{definition}[theorem]{Definition}
\newtheorem{observation}[theorem]{Observation}
\newcommand\tref[1]{Theorem~\ref{thm:#1}}
\newcommand\cref[1]{Corollary~\ref{cor:#1}}
\newcommand\sref[1]{Section~\ref{sec:#1}}
\newcommand\pref[1]{Proposition~\ref{prop:#1}}
\newcommand\oref[1]{Observation~\ref{obs:#1}}
\newcommand\cC{{\mathcal C}}
\newcommand\cF{{\mathcal F}}
\newcommand\cG{{\mathcal G}}
\newcommand\cH{{\mathcal H}}
\newcommand\cQ{{\mathcal Q}}
\begin{document}
\begin{frontmatter}
\title{Finding a non-minority ball with majority answers}
\author[renyi]{D\'aniel Gerbner\fnref{dani}}
\author[renyi]{Bal\'azs Keszegh\fnref{keszegh,boly}}
\author[elte]{D\"om\"ot\"or P\'alv\"olgyi\fnref{dom,boly}}
\author[renyi,gac]{Bal\'azs Patk\'os\fnref{mate,boly}}
\author[renyi]{M\'at\'e Vizer\fnref{mate,mate2}}
\author[bme]{G\'abor Wiener\fnref{gabor,boly}}
\address[renyi]{MTA Alfr\'ed R{\'e}nyi Institute of Mathematics \\ Re\'altanoda u. 13-15 Budapest, 1053 Hungary\\
        {\tt email}: $<$gerbner.daniel,keszegh.balazs,patkos.balazs, vizer.mate$>$@renyi.mta.hu}
\address[elte]{E\"otv\"os Lor\'and University, Department of Computer Science. \\ P\'azm\'any P\'eter s\'et\'any 1/C Budapest, 1117 Hungary, \\ {\tt email:}
        {\it dom@cs.elte.hu}}
\address[gac]{MTA-ELTE Geometric and Algebraic Combinatorics Research Group. \\ P\'azm\'any P\'eter s\'et\'any 1/C Budapest, 1117 Hungary, \\ {\tt email:}
        {\it patkosb@cs.elte.hu}}
\address[bme]{ Department of Computer Science and Information Theory \\
Budapest University of Technology and Economics \\ 1117 Budapest, Magyar tud\'osok k\"or\'utja 2. \\ {\tt email:}
        {\it wiener@cs.bme.hu}}				
\fntext[dani]{Research supported by the Hungarian Scientific Research Fund (OTKA), under grant PD 109537.}
\fntext[keszegh]{Research supported by the Hungarian Scientific Research Fund (OTKA), under grant PD 108406.}
\fntext[dom]{Research supported by the Hungarian Scientific Research Fund (OTKA), under grant PD 104386.}
\fntext[mate]{Research supported by the Hungarian Scientific Research Fund (OTKA), under grant SNN 116095.}
\fntext[gabor]{Research supported by the Hungarian Scientific Research Fund (OTKA), under grant K 108947.}
\fntext[mate2]{Research supported by ERC Advanced Research Grant no 267165 (DISCONV).}
\fntext[boly]{Research supported by the J\'anos Bolyai Research Scholarship of the Hungarian Academy of Sciences.}

\begin{abstract}
Suppose we are given a set of $n$ balls $\{b_1,\ldots,b_n\}$ each colored either red or blue in some way unknown to us.
To find out some information about the colors, we can query any triple of balls $\{b_{i_1},b_{i_2},b_{i_3}\}$.
As an answer to such a query we obtain (the index of) a {\em majority ball}, that is, a ball whose color is the same as the color of another ball from the triple.
Our goal is to find a {\em non-minority ball}, that is, a ball whose color occurs at least $\frac n2$ times among the $n$ balls.
We show that the minimum number of queries needed to solve this problem is $\Theta(n)$  in the adaptive case and $\Theta(n^3)$ in the non-adaptive case. We also consider some related problems.
\end{abstract}
\begin{keyword} combinatorial search \sep majority \sep median \end{keyword}
\end{frontmatter}

\section{Introduction}
\label{sec:intro}
This paper deals with search problems where the input is a set of $n$ balls, each colored in some way unknown to us and we have to find a ball possesing a certain property (or show that such a ball does not exist) by asking certain queries.
Our goal is to determine the minimum number of queries needed in the worst case.
It is possible that the queries are all fixed beforehand (in which case we speak of {\em non-adaptive} search) or each query might depend on the answers to the earlier queries (in which case we speak of {\em adaptive} search).
We say that a ball $b$ is a {\em majority ball} of a set $A$ if there are more than $\frac{|A|}{2}$ balls in the set that have the same color as $b$.
Similarly, a ball $b$ is a {\em non-minority ball} of a set $A$ if there are at least $\frac{|A|}{2}$ balls in the set that have the same color as $b$.
Note that these two notions are different if and only if $n$ is even and $\frac{|A|}{2}$ balls are colored of the same color, in which case each of these balls is a non-minority ball and there are no majority balls (moreover, if there are only two colors, then the size of the other color class is also $\frac{|A|}{2}$, thus every ball is a non-minority ball).
For more than two colors it is possible that even non-minority balls do not exist.
A ball $b$ is said to be a {\em plurality ball} of a set  $A$ if the number of balls in the set with the same color as $b$ is greater than the number of balls with any of the other colors. In this paper we focus on the case of just two colors.

The most natural non-trivial question is the so-called {\em majority problem} which has attracted the attention of many researchers.
In this problem our goal is to find a majority ball (or show that none exists), such that the possible queries are pairs of balls $\{b_1,b_2\}$ and the answer tells us whether $b_1$ and $b_2$ have the same color or not.
In the adaptive model, Fisher and Salzberg \cite{FS} proved that $\lfloor 3n/2 \rfloor -2$ queries are necessary and sufficient for any number of colors, while Saks and Werman \cite{SW} showed that if the number of colors is known to be two, then the minimum number of queries is $n-b(n)$, where $b(n)$ is the number of 1's in the binary representation of $n$
(simplified proofs of the latter result were later found, see \cite{ARS,HKM,W}). In the non-adaptive model with two colors, it is easy to see that the minimum number of queries needed is $n-1$ if $n$ is even and $n-2$ if $n$ is odd.

There are several variants of the majority problem \cite{A}. The plurality problem, where we have to find a plurality ball (or show that none exists) was considered, among others, in \cite{A,DJKKS,GKPP}.
Another possible direction is to use sets of size greater than two as queries \cite{MKW,DK}.

The main model studied in this paper is the following.
In the original comparison model the answer to the query $\{b_1,b_2\}$ can be interpreted as the answer to the question whether there is a majority ball in the subset $\{b_1,b_2\}$. If the answer is yes, then obviously both $b_1$ and $b_2$ are majority balls.
Therefore we obtain a generalization of the comparison model if for any query that is a subset of the balls the answer is either (the index of) a majority ball, or that there is no majority ball in the given subset (which cannot be the case if the size of the subset is odd and there are only two colors).
We study this model in case of two colors, and mostly when only queries of size three are allowed, although we also prove some results for greater query sizes.

Unfortunately, even asking all triples cannot guarantee that we can solve the majority problem for two colors.
Suppose we have an even number of balls that are partitioned into two sets of the same size, $X$ and $Y$, and suppose that the answer for any triple $T$ is a ball from $T\cap X$ if and only if $|T\cap X|\ge 2$. In this case we cannot decide whether all balls have the same color or all balls in $X$ are red, but all balls in $Y$ are blue.
In the former case all balls are majority balls, while in the latter there exists no majority ball.

Because of this, our aim will be to show a non-minority ball (which always exists if there are only two colors). Let us assume the balls are all red or blue and all queries are of size $q$.
We will denote the minimum number of queries needed (in the worst case) to determine a non-minority ball by $A_q(n)$ in the adaptive model and by $N_q(n)$ in the non-adaptive model.

At first sight the model we have just introduced seems to be rather artificial.
Let us, however, state a more natural problem that is equivalent to this model.

Suppose that our input is a binary sequence of length $n$, i.e., $n$ numbers 
such that each is either 0 or 1.
Our task is to find a median element, such that the queries are {\em odd} subsets of the input elements and the answer is one of the median elements of the subset. Let us assume queries are of size $q$.
Denote the the minimum number of queries needed in the worst case to determine a median element by $A^{med}_q(n)$ in the adaptive model and by $N^{med}_q(n)$ in the non-adaptive model.

\begin{prop}
\label{prop:median}
$A_{2l+1}(n)=A^{med}_{2l+1}(n)$ and $N_{2l+1}(n)=N^{med}_{2l+1}(n)$.
\end{prop}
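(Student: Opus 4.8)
The plan is to exhibit a colour-preserving dictionary between the two models and to check that, for odd query size, ``majority ball'' and ``median element'' denote the same object, so that every strategy and every adversary argument transfers verbatim.

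First I would fix the correspondence: given a two-colouring of $\{b_1,\dots,b_n\}$ by red and blue, associate to it the binary sequence obtained by writing $0$ for red and $1$ for blue (the choice is irrelevant by the red/blue symmetry). This is a bijection between the $2^n$ colourings and the $2^n$ binary sequences of length $n$, and it acts as the identity on the index set $\{1,\dots,n\}$; in particular a query, which in either model is a $q$-subset of indices, is literally the same object on both sides.

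Next I would observe that for a subset $T$ of odd size $q=2l+1$ containing only two colours, one colour occurs at least $l+1$ times and the other at most $l$ times, so the set of majority balls of $T$ is exactly the set of balls of the more frequent colour. On the binary side, the median elements of the corresponding odd multiset of $0$s and $1$s (the $(l+1)$-st entry of the sorted sequence, equivalently any entry equal to it) are exactly the elements whose value is the more frequent one. Hence, under the bijection, the set of admissible answers to a given query coincides in the two models. The same count applied to the whole input shows that a non-minority ball of $\{b_1,\dots,b_n\}$ corresponds precisely to a median element of the binary sequence, namely an element whose value occurs at least $n/2$ times.

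Finally I would conclude. Since the admissible answers coincide, an adaptive strategy for the ball problem becomes, after translation, an adaptive strategy for the median problem asking the same queries in the same order (and determining the same index), and conversely; likewise an adversary for one problem — a rule assigning consistently an admissible answer to every possible query — is an adversary for the other. Thus the worst-case length of an optimal strategy is unchanged, which gives $A_{2l+1}(n)=A^{med}_{2l+1}(n)$. The non-adaptive case is the special case where the query set is fixed in advance and the solver must recover the desired element from the answers alone; the identical translation applies, yielding $N_{2l+1}(n)=N^{med}_{2l+1}(n)$. I do not expect a genuine obstacle here beyond being careful that oddness of $q$ is exactly what makes the majority colour — and hence the dictionary — well defined: for even $q$ a query could legitimately return ``no majority'', an answer with no median-model counterpart, so the argument really does use $q=2l+1$.
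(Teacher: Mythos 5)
Your proof is correct and takes essentially the same approach as the paper, which gives a one-line argument: replacing $0/1$ by red/blue makes median elements coincide with non-minority balls; you simply spell out that for odd query size the per-query answer sets (majority balls vs.\ median elements) coincide and that the target objects (non-minority ball of $B$ vs.\ median of the sequence) coincide under the same dictionary.
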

\begin{proof}
If we replace 0 and 1 by red and blue, then the median elements of any set are exactly the non-minority balls of the set.
\end{proof}

We obtain a natural generalization that also works for even sized subsets if the answer is the $t$th element for some fixed $t$. More precisely, for a query $Q$, the answer may be $a$ if and only if there exist $t-1$ elements $e\in Q \setminus \{a\}$, such that $e\geq a$ and $|Q|-t$ elements $e'\in Q \setminus \{a\}$, such that $e'\leq a$.
Note that in this model there might be more than one valid answer to a given query.  This can be outruled by assuming that all elements are different  (in which case we do not deal with just the numbers 0 and 1, obviously). This approach was proposed by G. O.H. Katona and studied by Johnson and M\'esz\'aros \cite{JM}.
They have shown that if all elements are different, then they can be almost completely sorted\footnote{
Note that the $t-1$ largest and the $q-t$ smallest elements cannot ever be differentiated with such questions, so we only want to determine these and sort the rest.}
using $O(n\log n)$ queries in the adaptive model and $O(n^{q-t+1})$ queries in the non-adaptive model and both results are sharp.
However, their algorithms fail if not all elements are different.
Our results imply that the same bound holds in the adaptive model with no restriction. However, the bound in the non-adaptive model cannot be extended to the general case.
We discuss our related results in \sref{rem}.


To state our results concerning $A_3(n)$ and $N_3(n)$ we introduce the following notations.
We write $[n]=\{1,2,\dots,n\}$ for the set of the first $n$ positive integers and the set of balls is denoted by $B=[n]$.
For a set $S$, the set of its $k$-subsets will be denoted by $\binom{S}{k}$.
Let $\cQ \subseteq \binom{B}{3}$ be a query set.
Then for any ball $b\in B$ let $d_{\cQ}(b)=|\{Q\mid b\in Q\in \cQ\}|$ denote the \textit{degree} of $b$ in $\cQ$ and for any two balls $b_i,b_j\in B$ let $d_{\cQ}(b_i,b_j)=|\{Q\mid \{b_i,b_j\}\subset Q\in\cQ\}|$ denote the \textit{co-degree} of $b_i$ and $b_j$ in $\cQ$.
Furthermore, let us write $\delta(\cQ)=\min\{d_{\cQ}(b)\mid b\in B\}$ and $\delta_2(\cQ)=\min\{d_{\cQ}(b_i,b_j) \mid b_i,b_j\in B\}$.

Throughout the paper we use the following standard notation to compare the asymptotic behavior of two functions $f(n)$ and $g(n)$. We write $f(n)=o(g(n))$ if $\lim_n \frac{f(n)}{g(n)}=0$ holds. We write $f(n)=O(g(n))$ ($f(n)=\Omega(g(n))$) if there exists a positive constant $C$ such that $f(n)\le Cg(n)$ ($f(n)\ge Cg(n)$) holds for all values of $n$ and we write $f(n)=\Theta(g(n))$ if both $f(n)=O(g(n))$ and $f(n)=\Omega(g(n))$ hold. Sometimes, the function $f$ might have two variables $k$ and $n$. Then $f(k,n)=O_k(g(n))$ means that for every $k$ there exists a constant $C_k$ such that $f(k,n)\le C_kg(n)$ holds for all values of $n$. Finally, we write $f(n)=\tilde{O}(g(n))$ if there exist positive constants $C$ and $k$ such that $f(n)\le Cg(n)\log^kn$ holds.

\begin{theorem} \label{thm:adapt}
$A_3(n)=O(n)$.
\end{theorem}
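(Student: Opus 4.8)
The plan is to give an explicit adaptive strategy that uses $O(n)$ queries of size $3$. The key observation is that a single query $\{a,b,c\}$ that returns, say, $a$ tells us that $a$ is the majority of the triple, hence among $\{b,c\}$ at least one has the same color as $a$; equivalently, $a$ together with \emph{one} of $b,c$ forms a monochromatic pair, even though we do not learn which. So a natural first step is to process the balls in a tournament-like fashion: repeatedly take a current ``candidate'' ball together with two fresh balls, query the triple, and keep the returned ball as the new candidate while discarding the other two. After roughly $n/2$ such queries we are left with one ball and a collection of discarded pairs, together with a guarantee about how the candidate relates to the balls it beat.

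\medskip

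More precisely, I would maintain a multiset of \emph{certified monochromatic pairs} $\{x,y\}$ (pairs known to have the same color) and a single active ball $c$. Initialize with $c=b_1$. At each step, if two unprocessed balls $u,v$ remain, query $\{c,u,v\}$. If the answer is $c$, then one of $u,v$ matches $c$; but we don't know which — so instead it is cleaner to query $\{c,u,v\}$ and branch: if the answer is $u$ (resp.\ $v$), then $u$ (resp.\ $v$) is a majority of the triple, so $u$ shares a color with at least one of $\{c,v\}$, and we continue with the returned ball. The subtlety is that we want, at the end, to have partitioned almost all balls into monochromatic pairs plus a bounded remainder, from which a non-minority ball follows immediately: if we have $k$ monochromatic pairs and a few leftover balls, a non-minority ball of the whole set can be identified by counting, because each monochromatic pair contributes equally to ``not changing'' which color is in the majority. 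To make this work, I would use the standard Boyer--Moore majority-vote idea adapted to this query model: keep a candidate and a counter; a query involving the candidate and two new balls either increases our confidence (both new balls could be canceled against a stored ``opposite'' pair) or lets us form a new certified pair, and in all cases each query permanently retires at least one ball, giving the $O(n)$ bound.

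\medskip

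Concretely I expect the algorithm to run in two phases. In Phase 1, with $O(n)$ queries, reduce the instance to a constant-size (or $O(1)$-size) ``core'' of undetermined balls plus a list of certified monochromatic pairs $P_1,\dots,P_m$ and possibly a list of certified ``bichromatic'' pairs $R_1,\dots,R_r$ (pairs known to contain one ball of each color), obtained when a query $\{c,u,v\}$ returns $c$ and subsequent queries let us pin down that $u,v$ differ. Removing a bichromatic pair does not change which color is non-minority, and removing a monochromatic pair changes the counts of both colors by the same amount, so after deleting all the $P_i$ and $R_j$ we are left with a small set on which brute force (asking all of its constantly many triples, or even just reasoning) identifies a non-minority ball, and lifting back is trivial. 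In Phase 2, therefore, only $O(1)$ queries are needed.

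\medskip

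The main obstacle, and the part that needs the most care, is the case analysis when a query returns the \emph{current candidate} $c$: we learn ``$c$ agrees with $u$ or with $v$'' but not which, so we cannot immediately certify a monochromatic pair. The fix is to \emph{postpone}: hold $u$ and $v$ aside as a ``pending disjunction tied to $c$,'' and resolve it later with one extra query of the form $\{u,v,w\}$ for a suitably chosen already-understood ball $w$, or by pairing two pending disjunctions against each other. One must check that the bookkeeping of these pending disjunctions does not blow up — it will not, because each introduces at most one deferred query and retires its own two balls — so the total stays $O(n)$. Verifying that the final counting argument over monochromatic pairs, bichromatic pairs, pending disjunctions, and the constant core always pins down a genuine non-minority ball (for all parities of $n$) is the remaining routine-but-delicate check.
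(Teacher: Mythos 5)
Your proposal takes a genuinely different route from the paper, but it contains a gap that I believe is fatal as stated. The paper first runs an elimination procedure (their ``Algorithm 2DB'') that, using $n-2$ triple queries, produces two balls that are \emph{guaranteed} to have different colors (unless all balls are monochromatic). Once such a known $\mathbf{0}$ and $\mathbf{1}$ are in hand, the two queries $\{\mathbf{0},a,b\}$ and $\{\mathbf{1},a,b\}$ simulate a three-valued comparison (``$a\le b$'', ``$b\le a$'', or ``$a\ne b$''), and the problem reduces to finding a median with such comparisons, which they solve by a linear-time variant of the median-of-medians algorithm (their MoM3). So the work is not done by a voting/pairing reduction at all, but by turning triple queries into comparisons and then running a selection algorithm.

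The specific step in your write-up that I think fails is the claim that ``removing a monochromatic pair changes the counts of both colors by the same amount,'' and hence that after stripping off all certified monochromatic and bichromatic pairs, a non-minority ball of the small core lifts trivially to a non-minority ball of the whole set. Removing a monochromatic pair takes 2 away from exactly one color and 0 from the other, so it does \emph{not} preserve the non-minority property. Concretely: with $n=9$, five red and four blue, only red balls are non-minority; remove a red monochromatic pair and you have $3$ red, $4$ blue on $n-2=7$, where only blue balls are non-minority — lifting a blue answer back to $n=9$ gives a wrong output. (Bichromatic pairs are safe; monochromatic ones are not, and worse, even a ``certified monochromatic'' pair does not reveal \emph{which} color it carries, so you cannot compensate by bookkeeping.) The correct way to get mileage out of monochromatic pairs, as in the Saks--Werman style argument for the comparison model, is to keep one representative with a weight, not to delete the pair; your proposal drops the weight and so the counting at the end breaks.

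There is also a second, less central, gap: a single answer to $\{c,u,v\}$ only yields the disjunction ``the returned ball agrees with at least one of the other two,'' and your plan to resolve a pending disjunction by querying $\{u,v,w\}$ with an ``already-understood ball $w$'' presupposes that you have some ball whose color (relative to others) is pinned down — but your algorithm never explicitly establishes such a ball. This is exactly the role of the paper's 2DB phase: it manufactures a known $\mathbf{0}$ and a known $\mathbf{1}$, after which all the disjunctions can be resolved by Observation~\ref{obs:comparison}. Without an analogue of that phase, the pending-disjunction bookkeeping you sketch has nothing to anchor to. If you want to pursue a voting-style proof, the fixes would be (a) run a 2DB-type elimination first to obtain reference balls of both colors, and (b) replace ``delete monochromatic pairs'' with ``collapse to a weighted representative'' — at which point you are essentially re-deriving the structure of the paper's argument.
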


Before stating our results on $N_3(n)$ we state a theorem on the structure of query sets that do not determine non-minority balls.

\begin{theorem}
\label{thm:lower}

(i) If $|B|$ is odd and $\cQ\subseteq \binom{B}{3}$ is a set of non-adaptive queries with $\delta_2(\cQ)<\frac{n-2}{2}$, i.e.,
there is a pair of balls $x,y$ with $d_{\cQ}(x,y)<\frac{n-2}{2}$, then $\cQ$ cannot determine a non-minority ball.

(ii) For every $n$ there exists a non-adaptive query set $\cQ\subseteq \binom{B}{3}$ with $\delta_2(\cQ)=\lfloor n/2\rfloor +1$ that determines a non-minority ball.

(iii) If $|B|$ is even and $\cQ\subseteq \binom{B}{3}$ is a set of non-adaptive queries, such that there exist four balls $x,y,u,v$ with $d_{\cQ}(x,u)+d_{\cQ}(x,v)+d_{\cQ}(y,u)+d_{\cQ}(y,v)\le n/2-3$, then $\cQ$ cannot determine a non-minority ball.

(iv) Any non-adaptive query set $\cQ\subseteq \binom{B}{3}$ with $\delta_2(\cQ)>\frac{5n}{6}$ determines a non-minority ball.

(v) There exists a non-adaptive query set $\cQ\subseteq \binom{B}{3}$ with $\delta_2(\cQ)=\frac{5n}{6}-3$ that does not determine a non-minority ball.

\end{theorem}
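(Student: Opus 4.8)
The plan is to handle all five parts through one reformulation. Fix $\cQ\subseteq\binom B3$. An \emph{answer function} is a map $A\colon\cQ\to B$ with $A(Q)\in Q$, and a $2$-colouring $c$ is \emph{consistent} with $A$ if $A(Q)$ has the majority colour of $Q$ under $c$ for every $Q\in\cQ$ (a triple always has a strict majority colour). A short argument shows $\cQ$ fails to determine a non-minority ball exactly when there is an answer function $A$ together with a family $\mathcal{C}$ of non-monochromatic colourings, each consistent with $A$, whose minority classes cover $B$; for even $n$ these classes must have size $\le n/2-1$, since a balanced colouring has no minority ball. For a colouring $c$ with minority class $M_c$ and a query $Q$, the admissible answers are $Q\cap M_c$ if $|Q\cap M_c|\ge 2$ and $Q\setminus M_c$ otherwise; hence a query $Q=\{u,v,w\}$ has no answer consistent with all of $\mathcal{C}$ if and only if there are $c_1,c_2,c_3\in\mathcal{C}$ for which $u,v,w$ are, respectively, the lone element of $Q$ on one side of $M_{c_1},M_{c_2},M_{c_3}$ — I will call such a $Q$ \emph{rainbow for $\mathcal{C}$}. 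Thus $\cQ$ fails iff some family of non-monochromatic colourings has minority classes covering $B$ and no rainbow triple in $\cQ$, and the whole theorem reduces to counting, for a given family, the rainbow triples through a pair.

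For (i) I would exhibit such a defeating family. With $n$ odd and $d_{\cQ}(x,y)<\frac{n-2}{2}$, put $D=\{z:\{x,y,z\}\in\cQ\}$ and $W=B\setminus(\{x,y\}\cup D)$, so $|W|\ge\frac{n-1}{2}$. The colourings $c_1$ (minority class $\{y\}\cup D$) and $c_2$ (minority class $\{x\}\cup D$) make $x$, $y$ and all of $D$ minority and are easily checked to be mutually consistent; to these I add colourings whose minority classes are suitably structured subsets of $W$ of size $\le\frac{n-1}{2}$ that cover $W$. The role of the hypothesis is that the only triples through $\{x,y\}$ absent from $\cQ$ are exactly the $\{x,y,z\}$ with $z\in W$, so potential rainbow triples are confined, and the structure on $W$ can be chosen to avoid them; verifying that the whole family leaves no rainbow triple in $\cQ$ is the main work. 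Part (iii) is the analogue for even $n$: by a union bound the hypothesis on the four cross co-degrees leaves $\ge n/2-1$ balls forming no query with any of $\{x,u\},\{x,v\},\{y,u\},\{y,v\}$, and one runs the same scheme with colourings putting $x,y$ together and $u,v$ together in minority classes of size $\le n/2-1$ and sweeping the rest.

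For the two explicit query sets I would describe $\cQ$ as the complement of the rainbow triples of a chosen family. For (v), partition $B$ into six parts of size about $n/6$, labelled $R_{\{1\}},R_{\{2\}},R_{\{3\}},R_{\{1,2\}},R_{\{1,3\}},R_{\{2,3\}}$, set $M_i=R_{\{i\}}\cup R_{\{i,j\}}\cup R_{\{i,k\}}$, and take $\mathcal{C}=\{c_1,c_2,c_3\}$ with minority classes $M_1,M_2,M_3$; since $M_1\cap M_2\cap M_3=\emptyset$, no ball is non-minority in all three, and a case check over the six possible membership profiles of a pair shows every pair is rainbow in at most $\frac n6+O(1)$ triples, which after integrality and excluding $u,v$ as the third ball gives $\delta_2(\cQ)=\frac{5n}{6}-3$. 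For (ii) I would take a cyclic design on $\mathbb{Z}_n$ with every pair in exactly $\lfloor n/2\rfloor+1$ triples and decode by fixing a ball and taking a majority vote over the co-queries of each pair through it: since every co-degree exceeds $n/2$, whenever two balls of a pair share a colour the ``third-ball'' answers display more than $n/2$ balls of that colour, hence a non-minority ball. The main work here is pinning down the design and checking the decoder against all answer patterns.

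Finally, for (iv) I would argue by contradiction: if $\delta_2(\cQ)>\frac{5n}{6}$ but $\cQ$ fails, take a defeating family; its minority classes cover $B$, all have size $<n/2$, and hence cannot share a common point, so one can extract three of them, $c_1,c_2,c_3$, whose minority classes have empty common intersection. By consistency $\cQ$ contains none of the rainbow triples of $\{c_1,c_2,c_3\}$, so the extremal claim I need is that three subsets of $B$ of size $<n/2$ with empty common intersection always force some pair to be rainbow in more than $n-2-\frac{5n}{6}=\frac n6-2$ triples; this contradicts $\delta_2(\cQ)>\frac{5n}{6}$. The main obstacle is this extremal estimate — showing the rainbow triples of three such sets cannot be spread thinly enough to leave every pair in fewer than $\frac n6-2$ of them — together with justifying the extraction of three classes with empty common intersection; the construction in (v) shows the constant $\frac16$ is essentially sharp, which is a useful sanity check.
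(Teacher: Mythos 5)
Your rainbow reformulation is a sound dual of the paper's \oref{cover}, and your construction for part~(v) is essentially the paper's (a cyclic cover of $B$ by three sets of size $n/2-1$, with $\cQ$ the complement of the rainbow triples). But the other parts have real gaps.

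For (iv), the claim that minority classes of size $<n/2$ covering $B$ ``cannot share a common point'' is simply false: with $n=10$, take $M_1=\{1,2,3,4\}$, $M_2=\{1,5,6,7\}$, $M_3=\{1,8,9,10\}$. So the extraction of three classes with empty common intersection is unjustified, and the extremal estimate you then invoke is left unproved. What the paper actually does is pass to a \emph{minimal} covering subfamily $\cC$; minimality furnishes, for each $c_i\in\cC$, a \emph{private} ball $b_i\in M_i\setminus\bigcup_{c\neq c_i}M_c$. Privacy with respect to the whole of $\cC$ is what makes $\{b_1,b_2,t\}$ rainbow for every $t\in T:=B\setminus(M_1\cup M_2)$: such a $t$ lies in \emph{some} $M_c$, and $b_1,b_2\notin M_c$ precisely because they are private. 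With the pigeonhole fact (Observation~\ref{obbancs}) that among any three sets of size $<n/2$ some two have union of size $<5n/6$, one picks $c_1,c_2$ so that $|T|>n/6$ and gets $d_\cQ(b_1,b_2)\le n-2-|T|<5n/6-2$, the desired contradiction. Your route as written does not close.

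For (ii), a cyclic $2$-design with every pair in exactly $\lfloor n/2\rfloor+1$ triples is far more regularity than required, its existence for all $n$ is not established, and the ``majority vote'' decoder is not shown to work (the answer to $\{x,y,z\}$ is a majority ball, which need not reveal the colour of $z$). The paper's construction is the complement of the triples inside a fixed set: pick $S\subseteq B$ of size $\lfloor n/2\rfloor+1$ and let $\cQ=\binom{B}{3}\setminus\binom{B\setminus S}{3}$, so that exactly the pairs disjoint from $S$ have the minimum co-degree $|S|$. Determination is then proved not locally but by running the adaptive algorithms (2DB on $S$, then MoM3 anchored at two oppositely coloured balls found in $S$), noting that every query they ask meets $S$ and hence lies in $\cQ$. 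Finally, for (i) and (iii) the step you yourself flag as ``the main work'' --- covering the unconstrained part $W$ by minority classes of colourings all consistent with a single answer function --- is not automatic once $|W|>n/2$, where a single sweeping colouring no longer suffices; the paper avoids this by first enlarging $D$ to a set of size exactly $(n-3)/2$, forcing $|W|=(n-1)/2$, so that one colouring with minority class $W$ does the job. As submitted, parts (i)--(iv) are incomplete.
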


With the help of \tref{lower} we will give bounds on $N_3(n)$.

\begin{theorem} \label{thm:degree}
(i) If $n$ is odd, then $N_3(n) \ge \frac{1}{2}\binom{n}{3}$.

\vspace{1mm}

(ii) If $n$ is even, then $ N_3(n) \ge \frac {1}{8}	\binom{n-2}{3}$.

\vspace{1mm}

(iii) $N_3(n)\le (\frac{5}{6}+o(1))\binom{n}{3}$.

\end{theorem}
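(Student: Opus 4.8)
The plan is to read all three bounds off \tref{lower}, using elementary double counting for (i) and (ii) and a random construction for (iii); \tref{lower} carries the conceptual weight, so what remains here is essentially bookkeeping. For (i), since $n$ is odd, \tref{lower}(i) says that any query set $\cQ$ determining a non-minority ball must satisfy $d_{\cQ}(x,y)\ge \frac{n-2}{2}$ for every pair $\{x,y\}\in\binom{B}{2}$. Each triple of $\cQ$ contributes to exactly three co-degrees, so $3|\cQ|=\sum_{\{x,y\}\in\binom{B}{2}}d_{\cQ}(x,y)\ge \binom{n}{2}\cdot\frac{n-2}{2}$, and dividing by $3$ yields $|\cQ|\ge \frac{n(n-1)(n-2)}{12}=\frac12\binom n3$.

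For (ii), $n$ is even, and by \tref{lower}(iii) every query set $\cQ$ determining a non-minority ball satisfies $d_{\cQ}(x,u)+d_{\cQ}(x,v)+d_{\cQ}(y,u)+d_{\cQ}(y,v)\ge \frac n2-2$ for all four distinct balls $x,y,u,v$. Fix a pair $\{u,v\}$ and set $f(z)=d_{\cQ}(z,u)+d_{\cQ}(z,v)$ for $z\in B\setminus\{u,v\}$; the inequality then reads $f(x)+f(y)\ge \frac n2-2$ for all distinct $x,y\in B\setminus\{u,v\}$, so the second-smallest value of $f$ is at least $\frac{n-4}{4}$, whence $\sum_{z\notin\{u,v\}}f(z)\ge (n-3)\cdot\frac{n-4}{4}$. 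On the other hand, a count of incidences gives $\sum_{z\notin\{u,v\}}f(z)=2d_{\cQ}(u)+2d_{\cQ}(v)-2d_{\cQ}(u,v)\le 2d_{\cQ}(u)+2d_{\cQ}(v)$, so $d_{\cQ}(u)+d_{\cQ}(v)\ge \frac{(n-3)(n-4)}{8}$ for every pair. Summing over all $\binom n2$ pairs and using $\sum_{u}d_{\cQ}(u)=3|\cQ|$ gives $3(n-1)|\cQ|\ge \binom n2\cdot\frac{(n-3)(n-4)}{8}$, i.e., $|\cQ|\ge \frac{n(n-3)(n-4)}{48}\ge \frac18\binom{n-2}{3}$.

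For (iii), I construct $\cQ$ by including each triple of $\binom B3$ independently with probability $p=\frac56+\frac1{\log n}$. Then $\mathbb E|\cQ|=p\binom n3$, and for each pair $\{x,y\}$ the co-degree $d_{\cQ}(x,y)$ is binomial with mean $p(n-2)=\frac{5n}{6}+\Theta\!\left(\frac n{\log n}\right)$. A Chernoff bound gives $\Pr[d_{\cQ}(x,y)\le \frac{5n}{6}]=e^{-\Omega(n/\log^2 n)}$, so a union bound over the fewer than $n^2$ pairs shows that with high probability $\delta_2(\cQ)>\frac{5n}{6}$; likewise $|\cQ|=(p+o(1))\binom n3=(\frac56+o(1))\binom n3$ with high probability. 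Choosing an outcome where both events hold and applying \tref{lower}(iv), this $\cQ$ determines a non-minority ball, hence $N_3(n)\le(\frac56+o(1))\binom n3$.

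The only points requiring some care are the integrality (ceiling) issues when turning the fractional co-degree thresholds of \tref{lower} into clean bounds, and—in (iii)—picking the rate at which $p\to \frac56$ so that the Chernoff deviation of order $n/\log n$ comfortably beats the logarithm of the number of pairs; neither is a real obstacle, but the latter is the one place where an estimate must be chosen rather than being mechanical.
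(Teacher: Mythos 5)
Your proof is correct and follows essentially the same route as the paper's: each part is deduced from \tref{lower} by elementary double counting for (i) and (ii) and by a random construction for (iii). The minor differences---organizing the averaging in (ii) by first bounding $d_{\cQ}(u)+d_{\cQ}(v)$ for each pair rather than double-counting triples $(\{x,y\},\{u,v\},Q)$ directly, and taking $p=\frac56+\frac1{\log n}$ rather than $\frac56+n^{-1/3}$ in (iii)---do not change the substance.
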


\noindent
Concerning large query sizes we prove the following results:

\begin{theorem}\label{thm:parka}
We have $N_{2l+1}(n)=\Omega_l(n^3)$ and $A_{2l+1}(n)=O_l(n^2)$.
\end{theorem}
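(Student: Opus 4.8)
The plan is to treat the two bounds separately.

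For the lower bound $N_{2l+1}(n)=\Omega_l(n^3)$, I would generalize \tref{lower}(i) and (iii): show that there is a constant $c_l>0$ such that if $\cQ\subseteq\binom B{2l+1}$ determines a non-minority ball, then every pair of balls has co-degree at least $c_ln$ (when $n$ is odd), and more generally every bounded configuration of pairs, as in \tref{lower}(iii), has total co-degree $\Omega_l(n)$ (when $n$ is even). Granting this, the bound is immediate from double counting: $\sum_{\{b_i,b_j\}}d_\cQ(b_i,b_j)=\binom{2l+1}2|\cQ|$, while the left side is at least $\binom n2\cdot c_ln$, so $|\cQ|\ge\frac{c_l}{\binom{2l+1}2}\binom n2 n=\Omega_l(n^3)$. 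To prove the co-degree bound I would extend the adversary behind \tref{lower}. Fix a pair $\{x,y\}$ of small co-degree $d$; since each query through both $x$ and $y$ contains only $2l-1$ further balls, at most $(2l-1)d$ balls lie in a common query with both $x$ and $y$, so if $d<n/(2(2l-1))$ there are at least about $n/2$ \emph{free} balls, lying in no query with both $x$ and $y$. I would build a family of colourings that agree outside $\{x,y\}$ together with a bounded number of chosen free balls, using $x$, $y$ and these free balls as ``swing'' balls whose recolouring changes which balls are non-minority. The key is a local observation already present for $q=3$: a query meeting $\{x,y\}$ in at most one ball, or meeting a chosen swing ball, can always be answered \emph{at} that ball in a way valid for the whole family, and the few queries meeting both $x$ and $y$ avoid the chosen free balls and hence also admit a uniform answer. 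Arranging the family so that no ball is non-minority in all of its colourings then shows that $\cQ$ fails.

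For the upper bound $A_{2l+1}(n)=O_l(n^2)$: if the minority colour class has size less than $l$, then any single query of size $2l+1$ contains at least $l+2$ majority-coloured balls, so its answer is a majority-coloured, hence non-minority, ball; thus we may assume both classes have size at least $l$. Then I would reduce to the adaptive size-$3$ algorithm of \tref{adapt}, which uses $O(n)$ queries. Simulating one of its size-$3$ queries $\{a,b,c\}$ amounts to learning the same/different colour relation of the pairs $ab,ac,bc$ (from which the majority ball of $\{a,b,c\}$ is immediate). Whether two balls $a,b$ share a colour I would decide in two sub-steps: (1) a bootstrapping step using $O_l(n)$ size-$(2l+1)$ queries that either produces a \emph{pad} — a set of balls with known two-colour partition and at least $l$ balls of each colour (built by repeatedly querying a $(2l+1)$-set, whose majority ball is guaranteed at least $l$ same-coloured companions, and pinning down colours one ball at a time) — or else certifies that the minority class has size $<l$, in which case we are done by the previous sentence; and (2) two queries of the form $\{a,b\}\cup F$, where $F$ is a $(2l-1)$-subset of the pad taken to be an $(l-1,l)$-split, respectively an $(l,l-1)$-split, of the two pad-colours; a short case check shows these two answers determine whether $a$ and $b$ are both of the first colour, both of the second, or of different colours. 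Each simulated size-$3$ query then costs $O_l(n)$ size-$(2l+1)$ queries, so the total is $O_l(n)\cdot O(n)=O_l(n^2)$.

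The main obstacle in the lower bound is ensuring that the adversary's family is consistent with a \emph{single} answer vector, not merely pairwise consistent; this joint-consistency requirement is exactly what forces the four-ball condition in \tref{lower}(iii), and for general $l$ one must choose enough swing balls and verify the condition on all $(2l+1)$-subsets, while also treating the two parities of $n$ separately. For the upper bound, the delicate point is the pad-building subroutine, since individual answers only name a majority ball of an otherwise unknown set; here one must bootstrap colour information carefully and detect the degenerate small-minority case cleanly. We make no attempt to optimize this step, as $O_l(n^2)$ already suffices for the statement — a more careful amortization of the colour information learned during the simulation would in fact give $O_l(n)$.
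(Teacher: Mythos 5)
Your route is genuinely different from the paper's on both bounds, and neither half is airtight as written.

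For the lower bound the paper does not generalize the adversary of \tref{lower} at all; it uses a one-line simulation: replace each $(2l+1)$-query by a minimal family of $N_3(2l+1)$ triples inside it that already determines a majority (= non-minority, since $2l+1$ is odd) ball of that query, so any non-adaptive $(2l+1)$-strategy yields a non-adaptive $3$-strategy of size at most $N_3(2l+1)\,|\cQ|$, giving $N_{2l+1}(n)\ge N_3(n)/N_3(2l+1)$ directly from \tref{degree}. Your direct co-degree generalization is plausible — for odd $n$ the adversary of \tref{lower}(i) does carry over once all balls co-queried with $\{x,y\}$ fit into the smaller part, giving $\delta_2\ge n/(2(2l-1))-O(1)$ — but for even $n$ you would have to re-derive the four-ball analogue of \tref{lower}(iii) and redo the ordered-triple averaging of \tref{degree}(ii), which you only gesture at. That is real work that the paper's reduction sidesteps entirely.

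For the upper bound the paper does not build a pad or reduce to the size-$3$ algorithm. It runs a size-$(2l+1)$ version of Algorithm 2DB (always remove the answer ball, add a fresh one), after which either the $2l$ surviving balls are color-balanced or all $n-2l$ removed balls are monochromatic, and in either case a non-minority ball of the removed $(n-2l)$-set is a non-minority ball of $B$. This gives the clean recursion $A_{2l+1}(n)\le (n-2l)+A_{2l+1}(n-2l)$, with the base case $n\le 4l$ handled by \lref{midriff} (all queries suffice when $n=2l+3$, iterated to all small odd $n$). Your pad idea has a genuine gap that the paper's recursion is designed to avoid: after any 2DB-like process you may know that some $2l$-set is balanced, but you do \emph{not} learn its two-coloring, and with only majority answers to $(2l+1)$-queries it is not clear that the partition of such a set is even determinable — e.g.\ if all balls outside the set share one color, every query through an outside ball answers in that color and gives no information separating the $l$ red from the $l$ blue inside. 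Since your simulation of \oref{comparison} needs the pad split $(l-1,l)$ vs.\ $(l,l-1)$ by known colors, ``pinning down colours one ball at a time'' is exactly the unresolved step; as stated it does not compile into an algorithm. If you replace the pad by the paper's recursion on the removed set, the bound follows, but then you are no longer reducing to \tref{adapt}.
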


\begin{theorem}\label{thm:parosok}
We have $N_{2l}(n) \leq n(n-2l)$ and $A_{2l}(n) \leq n-2l+1$.
\end{theorem}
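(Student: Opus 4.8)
\textbf{The adaptive bound $A_{2l}(n)\le n-2l+1$.} The plan is to run an ``online'' procedure in the spirit of the sequential majority algorithm, but exploiting the fact that an even query may answer ``no majority'', which pins the queried $2l$-set to an exact $l$--$l$ split. Start with $Q_1=\{b_1,\dots,b_{2l}\}$. If the answer is ``no majority'', then $C:=\{b_2,\dots,b_{2l}\}$ has exactly $l$ balls of one colour and $l-1$ of the other, with $b_1$ witnessing the smaller side; for $j=2l+1,\dots,n$ ask $C\cup\{b_j\}$, whose answer is ``no majority'' precisely when $b_j$ lies on the smaller side of $C$ and a ball otherwise. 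After these $n-2l$ further queries every ball is classified relative to the majority colour of $C$, the two colour classes are counted exactly, and we output a ball of whichever colour is non-minority. If instead $Q_1$ returns a ball $m_1$, of colour $A$ say, I would iterate the ``peel and replace'' step $Q_i:=(Q_{i-1}\setminus\{m_{i-1}\})\cup\{b_{2l+i-1}\}$ for $i=2,3,\dots$.

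As long as each $Q_i$ returns a ball $m_i$, the overlap $Q_{i-1}\cap Q_i$ has $2l-1$ elements of which at least $l$ are of colour $A$; a pigeonhole count in a $2l$-set then forces $m_i$ to have colour $A$ as well, and maintains the invariant that the current $(2l-1)$-element ``remainder'' always holds at least $l$ balls of colour $A$. If some $Q_i$ answers ``no majority'', then the remainder $Q_{i-1}\setminus\{m_{i-1}\}$ is forced to be an exact $l$--$(l-1)$ palette, and we finish exactly as in the first case. If the peeling runs through $b_n$ with no ``no majority'' ever occurring, then $m_1,\dots,m_{n-2l+1}$ are $n-2l+1$ distinct balls of colour $A$, and the final remainder supplies at least $l$ more disjoint balls of colour $A$, so colour $A$ occurs at least $n-l+1\ge n/2$ times and we output $m_1$. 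One checks that in every branch exactly $n-2l+1$ queries are used (the case $n=2l$ needs only $Q_1$), which gives the bound.

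The only real subtlety here is psychological rather than technical: one is tempted to try to build a ``balanced palette'' before doing anything, which cannot be bootstrapped; the point is that no such palette is ever needed in advance — it is manufactured for free the instant a ``no majority'' answer appears, and until then the peel-and-replace invariant by itself already certifies the majority colour. I expect the work in writing this up to be routine bookkeeping: verifying that no ball is repeated inside a query and that the query count is exactly $n-2l+1$ in each branch.

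\textbf{The non-adaptive bound $N_{2l}(n)\le n(n-2l)$.} Here one cannot peel, so I would fix a pivot $b_1$ and a linear order of the remaining balls and, for each $b_i$ with $i\neq 1$, include the queries $\{b_1,b_i\}\cup W$ where $W$ slides over the windows of $2l-2$ consecutive balls among the rest (at most $n-2l$ windows if we are slightly wasteful), giving at most $n(n-2l)$ queries overall. For any colouring in which both colours have at least $l-1$ representatives outside $\{b_1,b_i\}$, some window $W$ is perfectly balanced, and then the answer to $\{b_1,b_i\}\cup W$ is ``no majority'' exactly when $b_1$ and $b_i$ differ; ranging over $i$ this determines the colouring up to a global flip and hence a non-minority ball. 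The remaining degenerate case, where one colour has fewer than $l+1$ balls, is handled separately, using that then almost every query has a forced majority colour that can be read off a returned ball. The hard part is the case analysis in this degenerate regime and the exact counting of windows; I do not expect $n(n-2l)$ to be tight (it is already far from $N_2(n)=\Theta(n)$), so I would not try to optimise it.
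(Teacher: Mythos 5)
Your adaptive argument is essentially the paper's: peel-and-replace until ``no majority'' is reported or the balls are exhausted, with the same pigeonhole invariant keeping at least $l$ balls of colour $A$ in the $(2l-1)$-element remainder, and with the remainder used as an $l$--$(l-1)$ reference set once a ``no majority'' appears. That part is correct.

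The non-adaptive argument has a genuine gap, and it departs from the paper's construction. Your key claim is that whenever both colours have at least $l-1$ representatives outside $\{b_1,b_i\}$, some window $W$ of $2l-2$ consecutive balls is balanced $(l-1)$--$(l-1)$. This is false. Take $l=3$ (so $q=6$, window size $4$), $n=10$, $b_i=b_{10}$, and colour $b_2,\dots,b_9$ as $ABBBABBB$. Every window of four consecutive balls contains exactly one $A$, so none is balanced, yet $A$ has $2=l-1$ and $B$ has $6\ge l-1$ representatives there; a periodic pattern such as $ABB\,ABB\,ABB\,ABB$ gives the same failure for larger $l$. There is also a second, independent gap: even when a balanced window does exist, nothing in the answers certifies \emph{which} window is balanced, since a ``no majority'' answer to $\{b_1,b_i\}\cup W$ occurs both when $W$ is $(l-1)$--$(l-1)$ and $b_1\neq b_i$, and when $W$ is $(l-2)$--$l$ and $b_1=b_i$; so you cannot read off $b_1$ versus $b_i$ from a single query. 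The paper avoids both problems by sliding the \emph{entire} $2l$-set cyclically over $\{b_1,\dots,b_n\}$: either every interval query returns a ball, which forces all returned balls to share a colour that is then a strict majority, or some interval returns ``no majority'', which is itself a certificate that this interval is exactly $l$--$l$ and can serve as a reference set; the $n-2l$ follow-up queries obtained by swapping one ball of that interval for an outside ball then classify everything, and doing this for all $n$ possible starting intervals (with the interval queries double-counted) gives $n(n-2l)$.
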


The rest of the paper is organized as follows: in \sref{algs} we examine some generalizations of the well-known median of medians algorithm of Blum, Floyd, Pratt, Rivest, and Tarjan \cite{bfprt}.
Using the facts shown in \sref{algs},  we prove \tref{adapt}, \tref{lower} and \tref{degree} in \sref{proofs}.
\sref{rem} contains the proof of \tref{parka} and \tref{parosok}, we introduce some new models and some open problems. We postpone some proofs and the analysis of a related model to the Appendix.

\section{Algorithms}
\label{sec:algs}

In this section we gather most of the building blocks of the algorithms we will use in Section 3 to prove our main results.
We start with an easy algorithm that finds two balls of different colors unless all balls have the same color.

\vskip 0.5truecm

\fbox{\textbf{Algorithm 2DB (Two Different Balls)}}

\vspace{3mm}

\textbf{Input}: a subset $S=\{b_1,b_2,\dots, b_m\}$ of the balls colored with two colors.

\vspace{3mm}

\textbf{Query}:  a triple $T=\{a,b,c\}$ of balls.

\vspace{3mm}

\textbf{Answer}: a majority ball in $T$ (that we will call \textit{answer ball}).

\vspace{3mm}

\textbf{Output}: two balls $b_i,b_j\in S$ with the property that either they are of different colors or all balls in $S$ have the same color.

\vspace{3mm}

\textbf{Description of Algorithm 2DB}

We start with an arbitrary query, then remove the answer ball, keep the other two elements and add a new element to obtain the second query. Then we repeat this procedure, always replacing the answer ball by a ball that has not appeared in any earlier query. After $m-2$ questions, we have removed $m-2$ balls, and thus we cannot continue this procedure and the algorithm outputs the remaining two balls. If these balls $b_i,b_j$ are of the same color, then it is easy to see that all balls in $S$ have the same color.

\vskip 0.1truecm

\textbf{Remark}: Algorithm 2DB is clearly adaptive, but if a non-adaptive query set $\cQ$ contains all queries from a subset $S$ of $B$, then Algorithm 2DB can be used. We will do so in \sref{proofs}.

From now on for a coloring $c$ and a ball $a$ we denote the color of $a$ by $c(a)$.

\begin{observation}
\label{obs:comparison} Suppose the balls are colored with 0 and 1, and we know that the color of $a_1$ is $0$ and the color of $a_2$ is $1$. If we query $\{a_1,b_1,b_2\}$ and $\{a_2,b_1,b_2\}$, then we can conclude one of the following.
\begin{itemize}
\item $c(b_1)\ne c(b_2)$,
\item $c(b_1)\le c(b_2)$,
\item $c(b_1)\ge c(b_2)$.
\end{itemize}
\end{observation}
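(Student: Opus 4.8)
The plan is to carry out a direct case analysis over the four possible color patterns of the pair $(b_1,b_2)$, namely $(0,0)$, $(0,1)$, $(1,0)$, $(1,1)$, recording for each pattern which balls may legitimately be returned as a majority ball of each of the two queried triples. Since $c(a_1)=0$ and $c(a_2)=1$ are known, for the query $\{a_1,b_1,b_2\}$ the possible answer balls are: any of $a_1,b_1,b_2$ when $(b_1,b_2)=(0,0)$; $a_1$ or $b_1$ when $(b_1,b_2)=(0,1)$; $a_1$ or $b_2$ when $(b_1,b_2)=(1,0)$; and $b_1$ or $b_2$ when $(b_1,b_2)=(1,1)$. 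An analogous table, obtained by swapping the roles of the colors $0$ and $1$ and of the balls $a_1$ and $a_2$, describes the possible answers to $\{a_2,b_1,b_2\}$.

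Next I would read the conclusion off from the answer $r_1$ to the first query. If $r_1=b_1$, the pattern lies in $\{(0,0),(0,1),(1,1)\}$, and in each of these $c(b_1)\le c(b_2)$. If $r_1=b_2$, the pattern lies in $\{(0,0),(1,0),(1,1)\}$, and in each of these $c(b_1)\ge c(b_2)$. The only remaining case is $r_1=a_1$, which excludes the pattern $(1,1)$ and leaves $\{(0,0),(0,1),(1,0)\}$; here I consult the answer $r_2$ to the second query: $r_2=b_2$ leaves $\{(0,0),(0,1)\}$, so $c(b_1)\le c(b_2)$; $r_2=b_1$ leaves $\{(0,0),(1,0)\}$, so $c(b_1)\ge c(b_2)$; and $r_2=a_2$ leaves $\{(0,1),(1,0)\}$, so $c(b_1)\ne c(b_2)$. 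Since $a_1\notin\{a_2,b_1,b_2\}$, these three subcases are exhaustive, so in every situation one of the three stated alternatives holds.

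There is no genuinely hard step: the only care required is the bookkeeping, namely checking that each ball listed as a possible answer really is a majority ball of its triple and that the case lists are complete. It is worth noting explicitly that in the pattern $(0,0)$ both $c(b_1)\le c(b_2)$ and $c(b_1)\ge c(b_2)$ are true, so whatever conclusion the decision rule draws in that pattern is valid — this is precisely what makes the rule above well defined.
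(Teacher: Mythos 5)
Your proposal is correct and rests on the same idea as the paper's proof: a direct case analysis on which ball is returned as majority for each of the two triples, using that $a_1$ has color $0$ and $a_2$ has color $1$ to translate each answer into one of the stated conclusions. The paper simply phrases it more compactly (if the first answer is some $b_i$ then $c(b_i)\le c(b_{3-i})$; if the second is some $b_i$ then $c(b_i)\ge c(b_{3-i})$; if both answers are $a_1,a_2$ respectively then $c(b_1)\neq c(b_2)$), whereas you first tabulate the possible answers for each of the four color patterns and then invert that table — same content, just more explicit bookkeeping.
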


\begin{proof}
If the answers to the queries $\{a_1,b_1,b_2\}$ and $\{a_2,b_1,b_2\}$ are, respectively, $a_1$ and $a_2$, then $b_1$ and $b_2$ are of different colors. 
Otherwise, either the answer to $\{a_1,b_1,b_2\}$ is $b_i$, in which case $c(b_i)\le c(b_{3-i})$ holds,
or the answer to $\{a_2,b_1,b_2\}$ is $b_i$, in which case $c(b_i)\ge c(b_{3-i})$ holds.
\end{proof}

%
		
		

\pref{median} showed the connection between the non-minority problem and finding a median element among $0-1$ entries. Algorithm 2DB and \oref{comparison} tell us that we can almost imitate comparison based algorithms to find the median with triple queries. Therefore it is natural to think that some of the comparison based median finding algorithms can be altered in a way that can be useful for our purposes. In the remainder of this section, we show two variants of the well-known median of medians (MoM) algorithm by Blum, Floyd, Pratt, Rivest, and Tarjan \cite{bfprt}. The first variant (MoM2) is a very natural generalization and is of independent interest: entries are not necessarily distinct integers, the answer to a query $\{a,b\}$ is either $a \le b$ or $b\le a$ and the Adversary has the right to answer any of $a\le b$ and $b \le a$ if $a$ and $b$ are equal. (Using the well-known Adversary method.) The second variant (MoM3) is much less natural, but is based on \oref{comparison}: entries are 0's and 1's and apart from the previous possibilities the Adversary has the right to answer $a\neq b$ if that is the case.

As in both models an element may appear more than once, therefore we need the following definitions in order to state our results.

Let $X$ be an $n$-element multiset of integers. We call an element $x$ of $X$ a {\em kth largest element} if there exists a partition of $X\setminus \{x\}=S\cup L$, such that $|L|=k-1$, $|S|=n-k$, such that each element in $L$ is at least $x$ and each element in $S$ is at most $x$. A \textit{median} is a ($\lfloor \frac{n}{2} \rfloor +1$)st largest element of an $n$-element set.

A \textit{decreasing enumeration} of an $n$ element multiset $X$ is a permutation $\sigma \in S_n$ with $x_{\sigma(1)} \ge x_{\sigma(2)} \ge \dots \ge x_{\sigma(n)}$.
Clearly, if all elements of $X$ are distinct, then $X$ has only one decreasing enumeration, while if in a multiset of integers the multiplicities are $k_1,k_2,\dots, k_l$, then the number of decreasing enumerations is $\prod^{l}_{i=1}k_i!$. We say that a multiset $X$ is completely sorted, if we fix one of its decreasing enumerations.


\begin{observation}
\label{obs:model1}
If for all $i,j \in [n]$ we know $x_i \le x_j$ or $x_i \ge x_j$, then we know a decreasing enumeration of $X$.
\end{observation}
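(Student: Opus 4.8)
The statement is essentially that pairwise comparability data determines a total preorder, and hence a consistent linear extension. I would prove it by explicitly constructing a decreasing enumeration from the comparison data, using a selection-sort / repeated-maximum argument. The plan is as follows. Define a relation $\preceq$ on $[n]$ by declaring $i \preceq j$ whenever we know $x_i \le x_j$; by hypothesis, for every pair $i,j$ at least one of $i\preceq j$ or $j \preceq i$ holds, so $\preceq$ is total. First I would observe that we may assume $\preceq$ is transitive on the level of our knowledge: if we know $x_i \le x_j$ and $x_j \le x_k$ then $x_i \le x_k$ is a logical consequence, so we may add it to our known facts without changing anything. Thus $\preceq$ is a total preorder.

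Next, the key step: in any finite total preorder there is a maximal element, i.e.\ an index $m$ with $i \preceq m$ for all $i$. To find one, start with any index and repeatedly replace the current candidate $m$ by any index $i$ with $m \preceq i$ that is not already known to satisfy $i \preceq m$; since there are finitely many indices and the candidate never returns to a strictly smaller one (by transitivity), this terminates at an index $m$ with $i \preceq m$ for all $i$, i.e.\ $x_i \le x_m$ is known for every $i$. Equivalently one can simply take $m \in \argmax$ under any numbering consistent with $\preceq$. Set $\sigma(1) = m$, remove $m$ from $[n]$, and recurse on the remaining $n-1$ indices with the restricted (still total, still transitive) relation. This produces a permutation $\sigma$ with, for each $t$, $x_{\sigma(t)} \ge x_{\sigma(t+1)}$ known from the data; chaining these gives $x_{\sigma(1)} \ge x_{\sigma(2)} \ge \dots \ge x_{\sigma(n)}$, so $\sigma$ is a decreasing enumeration of $X$ that is determined purely by the known comparisons.

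There is essentially no hard obstacle here; the only point that needs care is that "knowing a decreasing enumeration" should mean that the comparison data forces the specific word $x_{\sigma(1)} \ge \dots \ge x_{\sigma(n)}$ to be valid for the actual (unknown) values, which is immediate since every inequality in the chain is one we were told or one deducible by transitivity from facts we were told. One might also want to note that ties are handled correctly: if $x_i$ and $x_j$ are equal, then potentially both $i \preceq j$ and $j \preceq i$ are known, and $\sigma$ will simply place them in some order consistent with a valid decreasing enumeration — which is exactly the freedom allowed by the definition of decreasing enumeration for multisets. I expect the whole argument to take only a few lines once the total-preorder viewpoint is set up.
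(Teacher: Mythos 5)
Your proof is correct, and it reaches the same endpoint as the paper's but by a different mechanism. The paper models the known comparisons as a directed graph $D$ on $[n]$ with an arc $(i,j)$ whenever $x_i\le x_j$ is known, and argues by cases: if $D$ is acyclic it is a transitive tournament, so the unique topological order is the desired enumeration; if $D$ contains a directed cycle, all values along the cycle must be equal, so the cycle can be contracted, and iterating this eventually reduces to the acyclic case. Your argument instead passes to the transitive closure up front to get a total preorder, proves that a finite total preorder has a maximum element via a chain-climbing argument, and then extracts the enumeration by repeated maximum selection. The two routes correspond to the two standard ways of linearizing a preorder: the paper quotients out the equivalence classes (strongly connected components) first and then sorts the quotient, while you sort directly, letting ties fall where they may. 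Your version is slightly more elementary in that it avoids invoking the fact that an acyclic tournament is transitive and avoids the contraction step; the paper's is a bit more compact. One small caution in your write-up: the parenthetical remark that one could ``simply take $m\in\argmax$ under any numbering consistent with $\preceq$'' is circular as stated, since producing such a numbering is exactly what is being proved; the chain-climbing argument you give just before it is the one that actually carries the weight, and it does so correctly.
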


\begin{proof}
Consider the directed graph $D$ on vertex set $[n]$ with $(i,j) \in E(D)$ if and only if $x_i \le x_j $.
If $D$ is a DAG (directed acyclic graph), then $D$ is a transitive tournament and we are done.
If $D$ contains some directed cycles, then the integers in $X$ corresponding to all elements of such a directed cycle are equal.
Therefore we can contract all the vertices of the cycle into one vertex.
At some point no directed cycles will remain and we obtain the desired decreasing enumeration.
\end{proof}

The next algorithm shows that instead of asking all $\binom{n}{2}$ pairs, the problem of finding a $k$th largest element can be solved adaptively in linear time.

\vspace{5mm}

\fbox{\textbf{Algorithm MoM2}}

\vspace{3mm}

\textbf{Input}: an $n$ element multiset $X$ of integers and an integer $k$ with $1 \le k\le n$.

\vspace{3mm}

\textbf{Goal}: find one of the $k$th largest elements of the input multiset.

\vspace{3mm}

\textbf{Query}:  a pair $\{a,b\}$.

\vspace{3mm}

\textbf{Answer}: either $a \le b$ or $b \le a$ (in case $a=b$, both answers are possible).

\vspace{3mm}

\textbf{Output of Algorithm MoM2}: ($x$, $S$, $L$), a partition of the input multiset, where:

\vspace{1mm}

\ $\circ$ $x$ is one of the $k$th largest elements, 

\vspace{1mm}

\ $\circ$  $L$ is a set of $k-1$ elements that are at least $x$, and

\vspace{1mm}

\ $\circ$  $S$ is a set of $n-k$ elements that are at most $x$.

\vspace{3mm}

\textbf{Description of Algorithm MoM2}

\vspace{2mm}

The Algorithm MoM2 consists of 4 phases. For each phase we will write in teletype style what the algorithm does, and our analysis will be in normal typestyle.

\vspace{1mm}

In the description of the algorithm we will introduce two sets: $S'$ and $L'$, change them dynamically during the phases and finally use them to define $S$ and $L$ of the output. (For the sake of simplicity we will use the term set instead of multiset in the description of the algorithm.)

\vspace{1mm}

To count the queries used by the recursive calls, we denote by $f(n)$ the worst case running time of \textit{Algorithm MoM2} on $n$ elements for any $k$.

\vspace{3mm}

\textbf{Phase 1:} \textsf{We divide the elements of the $n$-element input set into groups of five except at most four elements and sort each group.}

\vspace{2mm}

\oref{model1} shows that this can be done with 10 comparisons in each group, but a simple case analysis shows that adaptively 7 queries are enough.
Therefore this phase requires $7\cdot \lfloor 0.2n\rfloor$ queries.
Take a median from each group to form $M$ of size $\lfloor 0.2n \rfloor$.

\vspace{3mm}

\textbf{Phase 2:} \textsf{By recursion we can find a median $p$ of $M$, called the \textit{pivot}, and a partition of the rest of the elements of $M$ into two almost equal subsets, $S'$ and $L'$, such that each element of $S'$ is at most as large as the pivot and each element of $L'$ is at least as large as the pivot.}

\textsf{Then we put into $S'$ the elements that were smaller than or equal to some $s\in S'\cup\{p\}$ in their (completely sorted) group and we put into $L'$ the elements which were larger than or equal to $\ell\in L'\cup\{p\}$ in their (completely sorted) group.}

\vspace{2mm}

We use $f(\lfloor 0.2n \rfloor)$ queries during this phase.
Note also that at the end of this phase we have $$|L'|,|S'|\ge \lfloor 0.3(n-4)\rfloor -1.$$

\vspace{3mm}

\textbf{Phase 3:} \textsf{ We compare each element $e$ in the complement of $S' \cup L' \cup \{p\}$ to the pivot and if the answer is $e \leq p$, we put $e$ in $S'$, if the answer is $p \leq e$, we put $e$ in $L'$.}

\vspace{2mm}

This phase requires at most $\lceil 0.4(n-4) \rceil+6$ queries. At the end of this phase we have $$\lfloor 0.3(n-4)\rfloor -1 \le|S'|,|L'|\le \lceil 0.7(n+4)\rceil +2.$$

\vspace{3mm}

\textbf{Phase 4:}

\vspace{2mm}

\textbf{Case 1:} \textsf{If $|L'| = k-1$ (i.e., the pivot is a $k$th largest element), then the output of the algorithm is $(p,S',L')$.}

\vspace{2mm}

\textbf{Case 2:} \textsf{If $|L'| > k-1$, then by a recursive call on $L'$, whose output is $(x,S'',L'')$, we find $x$, a $k$th element of $L'$
and a partition of $L' \setminus \{x\}=S'' \cup L''$ such that $|L''|=k-1$ and $|S''|=|L'|-k$.
In this case the output of the algorithm is $(x, S'\cup \{p\} \cup S'', L'')$.}

\vspace{2mm}

\textbf{Case 3:} \textsf{If $|L'| < k-1$, then by a recursive call on $S'$ we find $x$, a $(k-|L'|-1)$th element of $S'$ and a partition of $S' \setminus \{x\}=S'' \cup L''$ such that $|L''|=(k-|L'|-1)-1$.
In this case the output of the algorithm is $(x, S'', L'\cup \{p\} \cup L'')$.}

\vspace{4mm}

In each cases of Phase 4 we find the respective element, using at most $$f(\max(|S|;|L|))\le f(\lceil 0.7(n+4) \rceil+2) $$ queries. During the algorithm altogether we have used
$$f(n) \le 7\cdot \lfloor 0.2n\rfloor + f(\lceil 0.2n \rceil)+ \lceil 0.4(n-4) \rceil + 6 + f(\lceil 0.7(n+4) \rceil+2)$$

\noindent
queries. Now by induction $f(n)\le 18n+7$.
It is worth mentioning, that this is somewhat better than the well-known $22n$ bound for all different numbers using the original algorithm.
(Note that the best bound for finding the median is between $2n$ and $3n$, see \cite{DZ}.)

\vskip 0.5truecm

Let us now consider the second model, that is, where the input is restricted to binary sequences, but the Adversary can answer $a\le b$, $b\le a$ or $a\ne b$. 
In this model for any $k$ there is a strategy of the Adversary, such that even asking all possible queries we can not find any of the $k$th largest elements. Indeed, let us partition $X$ into two sets of equal size $X_1$ and $X_2$. If the Adversary answers $x_1 \neq x_2$ whenever $x_1\in X_1$ and $x_2 \in X_2$, then all we know is that there are the same number of 0's and 1's in $X$ and the two classes are $X_1$ and $X_2$, but we cannot tell which one is which, therefore we cannot solve the problem.

On the other hand, using Algorithm 2DB, for balls we can suppose that a 0 and a 1 is in $X=\{x_1,x_2,\dots,x_n\}$, provided not all elements of $X$ are the same. Still, a similar strategy of the Adversary shows that even after querying all pairs, we cannot sort all the elements. However, we can show a $k$th largest element for any $k$.

\begin{observation}
\label{obs:model2} If the Adversary answers all possible queries, then 

(i) we can partition $X=O\cup R$ such that $O$ is completely sorted and $|R|=2r$ contains exactly $r$ many 0's and 1's.
In particular, if $n$ is odd then we can find a median element.

(ii) We can determine a $k$th largest element provided $x_1=0,x_2=1$ hold.
\end{observation}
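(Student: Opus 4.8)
The plan is to analyse the family $\mathcal{C}$ of all $0$--$1$ colourings of $X$ consistent with the (complete) set of answers, and then in both parts exhibit a partition, respectively an element, that works simultaneously for every $c\in\mathcal{C}$. First I would record the bookkeeping: form the digraph $D$ whose arc $i\to j$ records an answer ``$x_i\le x_j$'', so its strongly connected components are forced monochromatic and the condensation is a poset $P$; and observe that whenever two classes of $P$ are incomparable, \emph{every} pair between them was answered ``$x_i\ne x_j$'', which — there being only two colours — forces these two classes to have opposite colours. Hence the incomparability graph of $P$ is bipartite, and each of its connected components is either \emph{colour-determined} (one side everywhere $0$, the other everywhere $1$) or a complete bipartite component carrying a single free bit (the choice of which side is $0$); call the latter a \emph{blob}.

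The structural heart is the claim that at most one blob is present, that if a blob is present then every element outside it has a forced colour, and that if no blob is present the only remaining freedom is a \emph{threshold chain} $u_1\ge u_2\ge\cdots\ge u_m$ (totally ordered by $P$) whose consistent colourings are exactly the $m+1$ thresholds ``the first $i$ of the $u_j$ are $1$''. I expect this to be the main obstacle, though it is not deep: if two blobs coexisted, or a blob coexisted with an undetermined element comparable to everything, then running through the few possible forced comparisons between the two parts and using that their ``$\ne$''-sides stay opposite in every $c$, one always deduces that one of the free bits is in fact forced — contradiction; combined with $\mathcal{C}\ne\emptyset$ this pins the structure down.

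For (i), let $O$ be all elements except those of the (at most one) blob $K=A\sqcup B$, with $|A|=s\le t=|B|$; every pair inside $O$ has a forced comparison — from $P$, or inside a colour-determined component from the known colours — so $O$ is completely sortable. If $K$ is present, move $t-s$ elements of $B$ into $O$ (they remain comparable to everything already in $O$) and let $R$ be $A$ together with the $s$ remaining elements of $B$; in every consistent colouring $A$ is monochromatic of the colour opposite to $B$, so $R$ has exactly $s$ ones and $s$ zeros, hence $|R|=2s$ and $R$ is balanced. (If $K$ is absent take $O=X$, $R=\emptyset$.) When $n$ is odd, $|O|=n-2s$ is odd, so $O$ has a median element $m$, which we can name because $O$ is completely sorted; since $R$ splits equally between the two colour classes, $m$ carries the majority colour of $X$ in every consistent colouring, and for odd $n$ an element of the majority colour is always a median — so $m$ is a median of $X$.

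For (ii), now $x_1=0$ and $x_2=1$ are forced, so there is at least one determined $0$ and one determined $1$, and every undetermined element lies (in every $c$) weakly below all determined $1$'s and weakly above all determined $0$'s. Let $a^-$ and $a^+$ be the least and greatest number of ones over $c\in\mathcal{C}$. If $k\le a^-$, every consistent colouring has at least $k$ ones, so the always-one element $x_2$ is a $k$th largest — output $x_2$; symmetrically output $x_1$ if $k>a^+$. In the remaining range $a^-<k\le a^+$, position $k$ falls in the undetermined part. If that part is a threshold chain $u_1\ge\cdots\ge u_m$, then $a^-=d_1$, $a^+=d_1+m$ (with $d_1$ the number of determined ones), and a short count over the $m+1$ threshold colourings shows $u_{k-d_1}$ is a $k$th largest element in each — in those where its slot is a one it is among the tied ones, and symmetrically when it is a zero — so output $u_{k-d_1}$. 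If instead the undetermined part is a blob $A\sqcup B$ with $|A|=s\le t=|B|$, then $a^-=d_1+s$, $a^+=d_1+t$, and the only two consistent colourings (the determined part being fixed) make the set of $k$th-largest elements equal to $D_1\cup B$ when $B$ is the one-side and to $B\cup D_0$ when $B$ is the zero-side; their intersection is $B$, so any element of $B$ is a $k$th largest in both — output one.
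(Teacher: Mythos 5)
Your approach is genuinely different from the paper's, and as written it has a real gap at its core. The paper avoids any structural analysis of the set of consistent colourings: it greedily removes pairs $\{x_i,x_j\}$ that were answered ``$x_i\ne x_j$'' and puts them into $R$; each such pair is automatically one $0$ and one $1$, so $R$ is balanced, and the remainder $O=X\setminus R$ contains no pair answered ``$\ne$'', so Observation~2 sorts it directly. Part (ii) then follows from a short case split on $k$ versus $r$ and $n-r$, outputting $x_2$, $x_1$, or the $(k-r)$th element of $O$, with no structural input at all. You instead try to reconstruct the whole shape of the space of consistent colourings: the condensation poset, bipartiteness of its incomparability graph, the ``blob'' dichotomy (at most one free complete-bipartite component; if present everything outside it is forced; otherwise the only freedom is a single threshold chain). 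Granting that structure, your case analyses in (i) and (ii) are careful and correct, and I believe the structural lemma is in fact true. But you flag it yourself as ``the main obstacle'' and then dispatch it with a one-sentence informal sketch (``running through the few possible forced comparisons\dots one always deduces that one of the free bits is in fact forced''). That lemma is the entire non-trivial content of your proof, and it is not proved: one has to enumerate how a putative second free component, or a free chain element, can sit against the first blob in the comparability order and show that every configuration either forces a bit or contradicts $\mathcal{C}\ne\emptyset$ (and the empty-$\mathcal{C}$ case should also be disposed of, as the paper does). Until that is written out, the argument does not stand. The paper's greedy-pair construction shows the whole structural picture is unnecessary: you never need to know what the free part looks like, only that each removed pair is balanced and what remains is linearly orderable.
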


\begin{proof}
We can check if there exists an $n$ element multiset satisfying all answers of the Adversary. If not, then we are done, as we revealed a contradiction in his answers. However if there is an $n$ element multiset $X$ satisfying Adversary's answers, then let us remove a subset $R \subseteq X$ in the following way: let $R_0=\emptyset$ and as long as there exists a pair $x_i,x_j \in X\setminus R_t$ with the answer $x_i\neq x_j$, let us put $R_{t+1}=R_t\cup\{x_i,x_j\}$. If there is no such pair, then we stop and write $R=R_t$. As removed elements come in pairs, we obtain that $|R|=2r$ for some $r$ and $R$ contains exactly $r$ 0's and 1's. If $k\le r$, then a $k$th largest element must be 1, thus we can output $x_2=1$. Similarly, if $k\ge n-r+1$, then a $k$th largest element must be 0 and we can output $x_1=0$. Finally, if $r<k<n-r$ holds, then by \oref{model1} we can sort $O=X\setminus R$ and output a $(k-r)$th element of $X\setminus R$. To see the second part of (i), note that if $n$ is odd, then $r<\frac{n+1}2<n-r$ always holds. 
\end{proof}

\vspace{5mm}

\fbox{\textbf{Algorithm MoM3}}

\vspace{3mm}

\textbf{Input}: an $n$-element multiset $X=\{x_1, x_2,x_3\dots,x_n\}$ containing 0's and 1's (and we will

use
$\textbf{0}$ instead of $x_1$ and $\textbf{1}$ instead of $x_2$) and an integer $k$ with $1\le k \le n$.

\vspace{3mm}

\textbf{Goal}: find one of the $k$th largest elements of the input multiset.

\vspace{3mm}

\textbf{Query}:  a pair $\{a,b\}$.

\vspace{3mm}

\textbf{Answer}: either $a \le b$, $b \le a$ or $a \neq b$.

\vspace{4mm}

\textbf{Output of Algorithm MoM3}: ($x,R,S,L$), a partition of the input multiset where:

\vspace{1mm}

\ $\circ$ $x$ is one of the $k$th largest elements of $X$.

\vspace{1mm}

\ $\circ$ $|R|=2r$ such that $r$ of them are 0's and $r$ of them are 1's.
Moreover, the output also contains a bijection among the 0's and 1's of $R$.
Using this bijection, we will talk about the {\em pair} of an element of $R$.

\vspace{1mm}

\ $\circ$ $S \cup L$, a partition of $X\setminus (\{x\} \cup R)$ such that each element in $S$ is at most and each element of $L$ is at least as large as $x$ with $\frac{|R|}{2}+|L| = k-1$ if $\frac{|R|}{2} \le \min \{n-k,k-1\}$.

If $\frac{|R|}{2} > k-1$ we output $(\textbf{1}, R, X \setminus (\textbf{1} \cup R), \emptyset)$ and  we output $(\textbf{0}, R, \emptyset, X \setminus (\textbf{0} \cup R))$ if $\frac{|R|}{2} > n-k$. (Note that $\frac{|R|}{2} > \max \{n-k, k-1\}$ cannot happen.)

\vspace{3mm}

\textbf{Description of Algorithm MoM3}

\vspace{3mm}

Algorithm MoM3 consists of 5 phases. For each phase we will write in teletype style what the algorithm does, and our analysis will be in normal typestyle.

\vspace{2mm}

In the description of the algorithm we will introduce three sets: $R'$, $S'$ and $L'$, change them dynamically during the phases and finally use them to define $R$, $S$ and $L$ of the output. We make sure that at any moment of the algorithm $R'$ consists of pairs of 0's and 1's. (Again, for the sake of simplicity we will use the term set instead of multiset.)

\vspace{2mm}

To count the queries used by the recursive calls, we denote by $g(n)$ the worst running time of \textit{Algorithm MoM3} on $n$ elements for any $k$. For the sake of simplicity during the computation we omit the additive constants, floor and ceiling signs.

\vspace{4mm}

\textbf{Phase 1:}  \textsf{We divide the elements of $X\setminus \{\mathbf{0,1}\}$ into groups of five (with the exception of at most four elements) and execute all possible queries within all groups. Applying \oref{model2} (i) we obtain a median element in each of the groups.}

\vspace{2mm}

Let $M$ denote the set of medians, then $|M|=0.2n$. For any $m\in M$, let $G_m$ denote its five element group, and let $O_m$ and $R_m$ be the partition of $G_m$ we obtain by \oref{model2}. We put $R'=\cup_{m\in M}R_m$ and note that $O_m$ has size 1, 3, or 5 for all $m\in M$.

\vspace{2mm}

This phase requires ${5\choose 2}\cdot  0.2n $ queries.

\vspace{3mm}

\textbf{Phase 2:}  \textsf{By a recursive call on $M\cup \{\mathbf{0}, \mathbf{1}\}$, we find a median $p$ of $M$, called the \textit{pivot}, a set of pairs $R_1$ (we do {\em not} put these elements into $R'$), and a partition of the rest of the elements into two subsets, $S'$ and $L'$ with $0 \le |S'|-|L'|\le 1$, such that each element of $S'$ is at most as large as the pivot and each element of $L'$ is at least as large as the pivot.
For all $s\in S'$ we put all elements of the sorted $O_s$ not larger than $s$ into $S'$ and for all elements $\ell\in L'$ we put all elements of the sorted $O_{\ell}$ not smaller than $\ell$ into $L'$.}

\vspace{2mm}

Note that both $L'$ and $S'$ can contain $0's$ and $1's$.

\vspace{2mm}

This phase requires $g(0.2n)$ queries.

\vspace{3mm}

\textbf{Phase 3:}  \textsf{For every pair $a,b\in R_1 \subset M$ we query all pairs $\{c,d\}$ with $c \in G_a,d\in G_b$.
Using the fact that one of $a$ and $b$ is 0, the other one is 1, we can apply \oref{model2} (i) to $G_a \cup G_b$ and obtain the partition $G_a\cup G_b=O_{a,b}\cup R_{a,b}$ with $a,b \in R_{a,b}$.}

\textsf{$\bullet$ If $|R_{a,b}| \ge 6$, we put its elements into $R'$,}

\textsf{$\bullet$ if not, then 
we can still deduce that some elements of $G_a$ and $G_b$ are 0 and 1 using that $a\neq b$ are median elements of $G_a$ and $G_b$.
Therefore we can pair the first one and the last one (if $|R_{a,b}|=4$) or the first two and the last two (if $|R_{a,b}|=2$) elements of the order of $O_{a,b}$ and put them into $R'$ along with $a$ and $b$.}

\vspace{2mm}

Note that at this moment by Phase 2 for every $m \in M\setminus R_1$ we have at least 3 elements of $G_m$ in $S' \cup L'\cup R'$ and by Phase 3 for every pair $a,b \in R_1$ we have at least 6 elements of $G_a\cup G_b$ in $R'$. Thus $|S'\cup L'\cup R'|\ge 0.6n$ holds. Observe that at this moment we have $|S'|, |L'|\le 0.3n$ as half of the groups $G_x$, $x \in M\setminus R_1$, contributed at most 3 elements to $S'$ and the other half of such groups contributed at most 3 elements to $L'$, while groups $G_x$ with $x \in R_1$ contributed only to $R'$. Let $Z=X\setminus (S'\cup L'\cup R')$.

\vspace{2mm}

This phase requires $5^2\cdot \frac {|R_1|}2\le  2.5n $ queries.

\vspace{3mm}

\textbf{Phase 4:}  \textsf{For every element $x \in Z$ we query the pair $\{x,p\}$.
If the answer is $x\le p$, then we put $x$ into $S'$, if the answer is $x\ge p$, then we put $x$ into $L'$.}


\vspace{2mm}

Observe that new elements to $S'$ and $L'$ came from $Z$, thus at this moment their size is not more than $0.7n$. Let $T$ be the set of those elements, when the answer is $x\neq p$. Note that all elements of $T$ have the same color (the opposite of that of $p$) and as $T \subseteq Z$, we have $|T| \le 0.4n$.

\vspace{2mm}

By the above observation, we use at most $0.4n$ queries.

\vspace{3mm}

\textbf{Phase 5:}  \textsf{In this phase, we compare every element of $S' \cup L'$ to one element of $T$. We proceed in the following way:}

\vspace{1mm}

\textsf{$\bullet$ if the answer to query $\{x,t\}$ with $x\in S'\cup L'$, $t \in T$ is $x\neq t$, then we put $x$ and $t$ as a pair to $R'$ and the remaining elements of $S'\cup L'$ should be compared to a remaining element of $T$,}

\textsf{$\bullet$ if the answer is $x\le t$ or $t \le x$, then we just move on to the next element of $S'\cup L'$.}

\vspace{2mm}

We use at most $|S'|+|L'|\le n$ queries.

\vspace{5mm}

\noindent
The following can occur during Phase 5:

\vspace{2mm}

\textsf{If in any of the following cases $\frac{|R'|}{2} > k-1$ happens, we output $(\mathbf{1}, R', L'\cup S' \cup \{\mathbf{0}\}, \emptyset )$ and if $\frac{|R'|}{2} > n-k$ happens, we output $(\mathbf{0}, R', \emptyset , L'\cup S'\cup \{\mathbf{1}\})$.}

\vspace{3mm}

\textbf{Case 1:} $T$ becomes empty as all its elements are moved to $R'$.

\vspace{2mm}

In this case $X$ is partitioned into $p$, $S'$, $L'$, and $R'$.
By the above case we are not done if $\frac{|R'|}{2} < k-1 < n-\frac {|R'|}2-1$, but then observe that the $k$th largest element out of the $n$ original elements is the same as the $(k-\frac {|R'|}2)$th element from $S'\cup L'\cup \{p\}$. We know that $s \le p \le \ell$ for all $s \in S', \ell \in L'$.

\vspace{2mm}

\textsf{Therefore we can make a recursive call to either $L' \cup \{\mathbf{0}, \mathbf{1}\}$ with $k'=k-\frac {|R'|}2$ or $S'\cup \{\mathbf{0}, \mathbf{1}\}$ with $k'=k-\frac {|R'|}2- |L'|-1$ depending on whether $k-\frac {|R'|}2\le |L'|$ or $k-\frac {|R'|}2 > |L'|+1$ (if $k-\frac {|R'|}2=|L'|+1$, then $p$ is a $k$th largest element of $X$ and we can output $(p,R',S'\cup \textbf{0}, L' \cup \textbf{1})$).}

\textsf{If the recursive call on $L'\cup\{\mathbf{0},\mathbf{1}\}$ outputs $(\ell, R'', S'', L'')$, then our final output is $(\ell, R' \cup R'', S' \cup S'', L'')$. The case when we make the recursive call to $S'\cup \{\mathbf{0},\mathbf{1}\}$ is similar.}

\vspace{2mm}

As $|S'|,|L'|<0.7n$, this requires at most $g(0.7n)$ queries.

\vspace{4mm}

\textbf{Case 2:} We obtain an answer $t \leq s$  for some $t \in T$ and $s \in S'$.

\vspace{2mm}

In this case the value of the pivot is $1$, all elements of $T$ are $0$ and all elements of $L'$ are $1$.
Indeed, supposing that the value of the pivot is 0, then as for all $t\in T$ we have $t\neq p$, we obtain $1=t\le s \le p=0$, a contradiction. As the value of the pivot is 1, so are the values of all $\ell \in L'$. Thus

\vspace{2mm}

\textsf{$\bullet$ if $k \le \frac{|R'|}{2}+|L'|+1$, then we can output $(\mathbf{1},R',T\cup S'\cup L^{--} \cup \mathbf{0}, L^-)$, where $L^-$ is an arbitrary subset of $L'$ of size $k-\frac{|R'|}{2}-1$ and $L^{--}= L' \setminus L^- $,}

\vspace{1mm}

\textsf{$\bullet$ if $k > n-\frac{|R'|}{2}-|T|$, we can output $(\mathbf{0}, R',S^- \cup T,S^{--}\cup L' \cup \mathbf{1})$,  where $S^-$ is a subset of $S'$ of size $n- \frac{|R'|}{2}-|T|-k$ and $S^{--}=S' \setminus S^-$, since half of the elements in $R'$ and all elements in $T$ are 0,}

\vspace{1mm}

\textsf{$\bullet$ if $\frac{|R'|}{2}+|L'|+1<k\le  n-\frac{|R'|}{2}-|T|$, then a $(k-(\frac{|R'|}{2}+|L'|+1))$st element of $S'$ is a $k$th element of $X$. We make a recursive call to $S'\cup \{\textbf{0},\textbf{1}\}$ with $k'=k-(\frac{|R'|}{2}+|L'|+1)$.  If the output of the recursive call is $(x,R'',S'',L'')$, then our final output is $(x,R'\cup R'', T\cup S'', L'\cup L'' \cup \{p\})$.}

\vspace{2mm}

This case uses at most $g(|S'|+2)\le g(0.7n)$ queries.

\vspace{3mm}

\textbf{Case 3:} We obtain an answer $\ell \leq t$ for some $t \in T$ and $\ell \in L'$.

\vspace{2mm}

In this case the value of the pivot is $0$, all elements of $T$ are $1$ and all elements of $S'$ are $0$ and \texttt{we proceed analogously to the previous case.}

\vspace{4mm}

\textbf{Case 4:} We have $s \leq t$ and $t \leq \ell$ for all $t \in T$, $s \in S'$, $\ell \in L'$.

\vspace{2mm}

In this case all elements of $S'$ have value $0$ and all elements of $L'$ have value $1$. Indeed, we have $0\le s \le p \le \ell\le 1$, $0\le s \le t \le \ell\le 1$ and $t\neq p$.

\vspace{2mm}

\textsf{We put one element $t\in T$ and $p$ to $R'$ as a pair and}

\vspace{2mm}

\textsf{$\bullet$ if $k-1 \le \frac{|R'|}{2}+|L'|$, then we can output $(\mathbf{1}, R', S'\cup (T \setminus \{t\})\cup L^{--}, L^-)$, where $L^-$ is a subset of $L'$ of size $k-\frac{|R'|}{2}-1$ and $L^{--}= L' \setminus L^-$,}

\textsf{$\bullet$ if $k\ge n-\frac{|R'|}{2}-|S'|$, then we output $(\mathbf{0}, R', S^- ,S^{--}\cup (T \setminus \{t\})\cup L')$, where $S^-$ is a subset of $S'$ of size $n- \frac{|R'|}{2}-k$ and $S^{--} = S' \setminus S^-$,}

\textsf{$\bullet$ if $\frac{|R'|}{2}+|L'|+1< k < n-\frac{|R'|}{2}-|S'| $, then we output $(t', R', S'\cup T^{--}, T^{-}\cup L')$, where $t'\in T\setminus\{t\}$, $T^-\subseteq T \setminus \{t,t'\}$ with $|T^{-}|=k-1-\frac{|R'|}{2}-|L'|$ and $T^{--}=(T \setminus \{t,t'\})\setminus T^-$.}

\vspace{3mm}

In all cases of the final case analysis we used at most $ g( 0.7n )$ queries. Therefore we obtain that the running time $g(n)$ satisfies

$$g(n) \le 2n + g(0.2n)+ 2.5n + 0.4n + n+ g(0.7n).$$
Solving this we obtain a linear bound $g(n)\le 59n+O(1)$.

\section{Proofs of the main theorems}
\label{sec:proofs}

In this section we prove Theorems 2, 3 and 4, and to make the presentation more followable we restate them before their proof.

First we put together the pieces from \sref{algs} to obtain a proof of \tref{adapt}.

\vspace{3mm}

\noindent
\textbf{Theorem 2.}
\textit{$A_3(n)=O(n)$.}

\vspace{1mm}

\begin{proof}[Proof of \tref{adapt}]
Let us start by executing Algorithm 2DB.
If the two balls of the output have the same color, then no matter which ball we output, it will be a non-minority ball.
Thus, from now on we assume that the two remaining balls are of different colors.
We call the color of one of them 0, the other 1, and denote the respective balls by $\textbf{0}$ and $\textbf{1}$.
To every ball we assign the number of its color, e.g., for two balls $a\le b$ means that if $a$ has color 1, then so does $b$.
By \oref{comparison}, we know that after obtaining the answers to the queries $\{\textbf{0},a,b\}$ and $\{a,b,\textbf{1}\}$ we know if $a\le b$, $b \le a$ or $a \neq b$ hold.

Therefore, we can run in linear time the queries that correspond to the queries of Algorithm MoM3 to find a median, which is, by Proposition \ref{prop:median}, a non-minority ball.
\end{proof}

Now we turn our attention to non-adaptive problems. We start with a definition and a simple observation that we will use in many of our proofs.

\vspace{2mm}

\begin{definition}
Let $\cQ$ be a non-adaptive query set and $(x_1,x_2,\dots,x_s)$ ($x_i\in Q_i$) a possible sequence of answers.
We say that $c:B\rightarrow \{0,1\}$ is a \textit{legal coloring} of the ball set $B$ if for every $1\le i \le s$ $x_i$ is a majority ball in $Q_i$.
The \textit{minority set} $M_c$ of a coloring $c:B\rightarrow \{0,1\}$ is the set of all balls that are not non-minority balls.
\end{definition}

\begin{observation}
\label{obs:cover} A non-adaptive query set $\cQ=\{Q_1,Q_2,\dots,Q_s\}$ does not determine a non-minority ball if and only if there exists a sequence $(x_1,x_2,\dots,x_s)$ ($x_i\in Q_i$) of answers for which the minority sets of all legal colorings cover the ball set, i.e., $B=\cup_{c\in\cC} M_c$ where $\cC$ denotes the set of all legal colorings.
\qed
\end{observation}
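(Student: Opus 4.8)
The plan is to unwind the definitions on both sides of the equivalence. Recall that a query set $\cQ$ \emph{determines a non-minority ball} precisely when there is a decision rule that, upon receiving any answer sequence $(x_1,\dots,x_s)$ realizable by some coloring of $B$, outputs a ball guaranteed to be a non-minority ball, i.e.\ a ball that is a non-minority ball in \emph{every} coloring consistent with those answers. By the definition of legal coloring, the colorings consistent with $(x_1,\dots,x_s)$ are exactly the legal colorings in $\cC$ for that sequence, and the realizable answer sequences are exactly those with $\cC\neq\emptyset$. Hence the optimal decision rule, given a realizable answer sequence, is to output any ball lying in $\bigcap_{c\in\cC}(B\setminus M_c)$, and such a ball exists if and only if this intersection is nonempty, equivalently if and only if $\bigcup_{c\in\cC}M_c\neq B$.

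First I would record one implication in contrapositive form: if for some answer sequence we have $B=\bigcup_{c\in\cC}M_c$, then since $B\neq\emptyset$ (as $n\ge 1$) this forces $\cC\neq\emptyset$, so the sequence is realizable; yet $\bigcap_{c\in\cC}(B\setminus M_c)=B\setminus\bigcup_{c\in\cC}M_c=\emptyset$, so no ball can be safely output for this sequence, and therefore $\cQ$ does not determine a non-minority ball.

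For the converse I would argue: suppose $\cQ$ does not determine a non-minority ball. Then no decision rule succeeds, so there is some realizable answer sequence $(x_1,\dots,x_s)$ for which no single ball is a non-minority ball across all legal colorings; that is, $\bigcap_{c\in\cC}(B\setminus M_c)=\emptyset$, which is exactly $\bigcup_{c\in\cC}M_c=B$. This yields the equivalence.

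The argument is essentially a matter of quantifier bookkeeping, so there is no real obstacle; the only point requiring a moment's care is the treatment of answer sequences that no coloring realizes — these are vacuously harmless and are correctly excluded by quantifying only over realizable sequences — together with the remark that $B=\bigcup_{c\in\cC}M_c$ already guarantees $\cC\neq\emptyset$, so the offending sequence is indeed realizable.
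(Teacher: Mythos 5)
Your proof is correct and is exactly the definitional unwinding the paper has in mind; the paper itself gives no argument (it ends the Observation with \qed, treating it as immediate from the definitions of ``determines,'' ``legal coloring,'' and ``minority set''). Your careful handling of realizability — noting that $B=\bigcup_{c\in\cC}M_c$ forces $\cC\neq\emptyset$ because $B\neq\emptyset$, so the offending answer sequence is in fact realizable — is the only point that requires any thought, and you got it right.
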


Using the above simple observation, we can prove \tref{lower} that we restate here.

\vspace{3mm}

\noindent
\textbf{Theorem 3.}

\vspace{2mm}

\textit{(i) \ If $|B|$ is odd and $\cQ\subseteq \binom{B}{3}$ is a set of non-adaptive queries with $\delta_2(\cQ)<\frac{n-2}{2}$, i.e.,
there is a pair of balls $x,y$ with $d_{\cQ}(x,y)<\frac{n-2}{2}$, then $\cQ$ cannot determine a non-minority ball.}

\vspace{1mm}

\textit{(ii) \ For every $n$ there exists a non-adaptive query set $\cQ\subseteq \binom{B}{3}$ with $\delta_2(\cQ)=\lfloor n/2\rfloor +1$ that determines a non-minority ball.}

\vspace{1mm}

\textit{(iii) \ If $|B|$ is even and $\cQ\subseteq \binom{B}{3}$ is a set of non-adaptive queries, such that there exist four balls $x,y,u,v$ with $d_{\cQ}(x,u)+d_{\cQ}(x,v)+d_{\cQ}(y,u)+d_{\cQ}(y,v)\le n/2-3$, then $\cQ$ cannot determine a non-minority ball.}

\vspace{1mm}

\textit{(iv) \ Any non-adaptive query set $\cQ\subseteq \binom{B}{3}$ with $\delta_2(\cQ)>\frac{5n}{6}$ determines a non-minority ball.}

\vspace{1mm}

\textit{(v) There exists a non-adaptive query set $\cQ\subseteq \binom{B}{3}$ with $\delta_2(\cQ)=\frac{5n}{6}-3$ that does not determine a non-minority ball.}

\vspace{3mm}

\begin{proof}[Proof of \tref{lower}]
First we prove (i).
Let $|B|=2k+1$ and assume that for a query set $\cQ$ and a pair of balls $x,y \in B$ we have $d_{\cQ}(x,y)< k$.
We have to show that we cannot determine a non-minority ball.
Let us partition $B\setminus \{x,y\}$ into two sets $B_1$ and $B_2$ such that $|B_1|=k$, $B_2=k-1$, and $\{b\in B: \{x,y,b\}\in\cQ\} \subseteq B_2$. To prove (i) we will show how to answer queries of $\cQ$ such that the conditions of \oref{cover} are met. Let $Q\in\cQ$ be an arbitrary query.

\begin{enumerate}
\item
if $|Q\cap B_i|\ge 2$ for some $i$, then the answer to $Q$ is a ball from $Q\cap B_i$,
\item
if $|Q\cap B_i|=1$ for $i=1,2$, then we answer the ball from $Q\cap \{x,y\}$,
\item
if $\{x,y\}\subset Q$, then by the assumption on the partition we know that the third ball in $Q$ belongs to $B_2$, and the answer is this third ball.
\end{enumerate}

Note that the above answers are all possible if we assume that balls in $B_1$ are blue and balls in $B_2$ are red. Furthermore, (3)  assures that at least one of $x$ and $y$ is also red. Thus the three different colorings  $c_1,c_2,c_3$ for which $B_1$ is blue, $B_2$ is red and at least one of $x,y$ is red, with respective minority sets $M_1=B_1, M_2=B_2 \cup \{x\}, M_3=B_2\cup \{y\}$, are all legal with respect to the above answers. Therefore by \oref{cover} $\cQ$ does not determine a non-minority ball.

To prove (ii) we construct a query set $\cQ\subseteq \binom{B}{3}$ with $\delta_2(\cQ)=\lfloor n/2\rfloor +1$ that determines a non-minority ball.
Let $S\subseteq B$ be a subset of the balls with $|S|=\lfloor n/2\rfloor+1$ and let $\cQ=\binom{B}{3}\setminus \binom{B\setminus S}{3}$. In proving that $\cQ$ does indeed determine a non-minority ball we will apply our results concerning the adaptive algorithms from  \sref{algs}. 

We start by executing Algorithm 2DB on $S$ (we can do that as $\binom{S}{3} \subseteq \cQ$).
We obtain two balls $a$ and $b$ that have different colors unless all balls in $S$ have the same color. If $a$ and $b$ are of the same color, then that color is the majority color (and hence non-minority), and if some $x$ is colored with the other color, $x$ cannot be in $S$, therefore the answer to the query $\{a,x,b\}$ cannot be $x$.
We look at the queries of the form $\{a,x,b\}$ for all $x \not\in S$, and define $R$ to be the set of those balls $x\in B\setminus S$, for which the answer to the query $\{a,x,b\}$ is $a$ or $b$.
We will make sure that our final output will be a ball in $B\setminus R$. This guarantees that if the colors of $a$ and $b$ are the same, then we will output a non-minority ball.
Therefore, we can assume that the color of $a$ is 0 and the color of $b$ is 1.
If the answer to the query $\{a,x,b\}$ is $a$, then the color of $x$ is 0, if the answer is $b$, then the color of $x$ is 1. Let $n_1$ denote the number of balls $x$ of the latter type. We remove the balls in $R$, and then the median is the $k$th largest among the remaining balls for $k=\lceil n/2 \rceil -n_1$ (this is always positive as $|S|=\lfloor n/2\rfloor +1$), and we can find it using Algorithm MoM3.
All these queries are in $\cQ$, as they contain $a\in S$ or $b\in S$.

For (iii), we have to show that if $n=|B|$ is even and $\cQ\subseteq \binom{B}{3}$ is a set of non-adaptive queries such that there exist four balls $x,y,u,v$ with $d_{\cQ}(x,u)+d_{\cQ}(x,v)+d_{\cQ}(y,u)+d_{\cQ}(y,v)\le n/2-3$, then $\cQ$ cannot determine a non-minority ball.
Take the set of balls that are in a query with one of $\{x,y\}$ and one of $\{u,v\}$, e.g., $\{bxu\}$,
and add some further balls to them, if necessary, to form a set $B_1$ of size $n/2 -3$.
Let $B_2=B\setminus (B_1\cup \{x,y,u,v\})$ be the set of the remaining $n/2-1$ balls.
We answer the queries such that all colorings are valid for which $B_1$, $B_2$, $\{x,y\}$ and $\{u,v\}$ are all monochromatic sets, $B_1$ and $B_2$ are colored differently and either $x$ and $y$, or $u$ and $v$ (possibly all four) have the same color as the balls in $B_1$.
\begin{itemize}
\item
If a query $Q$ meets one of the four sets above in at least two balls, then the answer is one of those balls,
\item
the answer to a query $\{\xi, \nu, b\}$ with $\xi \in \{x,y\}$, $\nu \in \{u,v\}$, $b\in B_1$ is $b$,
\item
the answer to a query $\{b_1,b_2,z\}$ with $b_1 \in B_1,b_2 \in B_2, z \in\{x,y,u,v\}$ is $z$.
\end{itemize}
By the definition of the partition $B_1,B_2$, there are no other possible queries, and one can easily check that the sets $B_2, B_1\cup \{x,u\}$ and $B_1\cup \{y,v\}$ are all minority sets of legal colorings and thus we are done by \oref{cover}.

We prove (iv) by contradiction. Assume $\cQ$ is a query set with $\delta_2(\cQ)\ge \frac{5n}{6}$ that does not determine a non-minority ball. Then by \oref{cover} there exists a set of answers for which the minority sets of all legal colorings cover the ball set $B$. Let $\cC$ be a minimal set of legal colorings for which $B=\cup_{c\in\cC}M_c$ holds. Note that $|\cC|\ge 3$ as $|M_c|<n/2$ for any legal coloring $c$. Let us consider three legal colorings $c_1,c_2,c_3 \in \cC$ and the corresponding minority sets $M_1, M_2, M_3$. By the minimality of $\cC$, there exist balls $b_i$ $i=1,2,3$ with $b_i\in M_{i}\setminus \cup_{c\in \cC, c\neq c_i}M_c$. We will use the following simple observation, that we include here without proof:

\begin{observation}\label{obbancs}
If $A_1,A_2,A_3\subseteq [n]$ with $|A_i|<n/2$, then for some $i\ne j$ $|A_i\cup A_j|< 5n/6$.
\end{observation}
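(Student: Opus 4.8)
\textbf{Proof proposal for Observation \ref{obbancs}.} The plan is to bound the \emph{sum} of the three pairwise unions $|A_1\cup A_2|+|A_1\cup A_3|+|A_2\cup A_3|$ from above by $5n/2$; then the smallest of the three terms is below the average $5n/6$, which is exactly the claim. No case analysis or contradiction is needed.

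First I would set up the standard incidence bookkeeping. For $k\in\{0,1,2,3\}$ let $n_k$ denote the number of elements of $[n]$ lying in exactly $k$ of the sets $A_1,A_2,A_3$; these satisfy $n_0+n_1+n_2+n_3=n$. Counting incidences between elements and the three sets gives $|A_1|+|A_2|+|A_3|=n_1+2n_2+3n_3$, and counting incidences between elements and the three pairwise intersections gives $|A_1\cap A_2|+|A_1\cap A_3|+|A_2\cap A_3|=n_2+3n_3$ (an element in exactly two of the $A_i$ lies in exactly one of the three pairwise intersections, and an element in all three lies in all three of them).

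Applying $|A_i\cup A_j|=|A_i|+|A_j|-|A_i\cap A_j|$ and summing over the three pairs,
$$\sum_{1\le i<j\le 3}|A_i\cup A_j| \;=\; 2\bigl(|A_1|+|A_2|+|A_3|\bigr)-\sum_{1\le i<j\le 3}|A_i\cap A_j| \;=\; 2(n_1+2n_2+3n_3)-(n_2+3n_3) \;=\; 2n_1+3n_2+3n_3.$$
Now write $2n_1+3n_2+3n_3=(n_1+2n_2+3n_3)+(n_1+n_2)$. The first summand equals $|A_1|+|A_2|+|A_3|<3n/2$ by the hypothesis $|A_i|<n/2$, and the second summand satisfies $n_1+n_2\le n_1+n_2+n_3\le n$. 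Hence $\sum_{i<j}|A_i\cup A_j|<3n/2+n=5n/2$, so at least one pair has $|A_i\cup A_j|<5n/6$.

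There is essentially no obstacle here: it is a short counting identity, and the only point to watch is retaining the strict inequality (which propagates from $|A_i|<n/2$) and getting the signs in the inclusion--exclusion bookkeeping right. For completeness I would remark that an equally short alternative is available by contradiction: if all three unions had size $\ge 5n/6$, then each pairwise intersection would have size $<n/6$, and then $n-|A_1|\ge |(A_2\setminus A_1)\cup(A_3\setminus A_1)|\ge (5n/6-|A_1|)+(5n/6-|A_1|)-|A_2\cap A_3|$ forces $|A_1|>n/2$, a contradiction.
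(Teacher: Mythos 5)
Your proposal is correct, and both the main counting argument and the alternative contradiction sketch go through. The paper itself states Observation~\ref{obbancs} explicitly ``without proof'', so there is no paper argument to compare against; your double-counting approach (bound $\sum_{i<j}|A_i\cup A_j|$ by $5n/2$ and invoke the pigeonhole principle) is a clean and complete way to fill that gap. The key identity $\sum_{i<j}|A_i\cup A_j|=2\sum_i|A_i|-\sum_{i<j}|A_i\cap A_j|=2n_1+3n_2+3n_3$ is correct, the split $(n_1+2n_2+3n_3)+(n_1+n_2)$ cleanly isolates $\sum|A_i|<3n/2$ and $n_1+n_2\le n$, and the strict inequality is preserved throughout, so the minimum of the three pairwise unions is strictly below the average $5n/6$. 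The alternative by contradiction is also sound: each $|A_i\cap A_j|<n/6$ once $|A_i\cup A_j|\ge 5n/6$, and the containment $(A_2\setminus A_1)\cup(A_3\setminus A_1)\subseteq[n]\setminus A_1$ combined with inclusion--exclusion forces $|A_1|>n/2$.
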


Applying Observation \ref{obbancs} to the sets $M_{1},M_{2},M_{3}$, we obtain without loss of generality that the set $T:=B\setminus (M_{1}\cup M_{2})$ has size strictly larger than $n/6$. We claim that $Q_t:=\{b_1,b_2,t\}\notin \cQ$ for any $t\in T$. Indeed, if $Q_t \in \cQ$, then the answer to $Q_t$ cannot be $b_1$ as $c_1$ is a legal coloring and $b_1 \in M_{1}$, while $b_2,t \notin M_{1}$. Similarly, $b_2$ cannot be the answer to $Q_t$ as $c_2$ is a legal coloring and $b_2 \in M_{2}$, while $b_1,t \notin M_{2}$. Finally, $t$ cannot be the answer to $Q_t$ as $t\in T$ and thus there is a legal coloring $c\in \cC$ such that $t\in M_c$ and by the choice of $b_1$ and $b_2$ we have $b_1,b_2 \notin M_c$. This shows that $d_\cQ(b_1,b_2)\le n-2-|T|\le \frac{5n}{6}-2$, a contradiction.

To prove (v), we need a construction.
First let us assume that $n$ is divisible by six.
Let $A_1,A_2,A_3,A_4,A_5,A_6$ be six pairwise disjoint subsets of $[n]$ with $|A_1|=|A_3|=|A_5|=n/6-1$ and $|A_2|=|A_4|=|A_6|=n/6+1$.
Then their union is $[n]$ and the sets $M_1:=A_1\cup A_2\cup A_3$, $M_2=A_3\cup A_4\cup A_5$, $M_3=A_5 \cup A_6\cup A_1$ all have size $n/2-1$.
Let us define
\[
\cQ=\binom{[n]}{3} \setminus \left( \left\{ \{x,y,z\}\mid x\in A_1,y\in A_3,z\in A_5\right\}
\cup \left\{ \{x,y,z\}\mid x\in A_2,y\in A_4,z\in A_6\right\}\right).
\]
We claim that there exists a set of answers to all the queries in $\cQ$ such that all $M_i$'s are possible minority sets and therefore by \oref{cover}, $\cQ$ does not determine a non-minority ball.
Indeed, if a query intersects an $A_i$ in at least two balls, then we can answer any of these balls.
Otherwise, the query contains one ball from both of two adjacent sets $A_i$ and $A_{i+1}$ (where $A_7=A_1$). By symmetry we can assume they are $a_1\in A_1$ and $a_2\in A_2$.
If the third ball is in $A_3$ or $A_4$, we can answer $a_2$, while if the third ball is in $A_5$ or $A_6$, we can answer $a_1$.
One can easily verify that the colorings $c_i$ with $c^{-1}(0)=M_i$ are all legal.
If $n=6r+i$ for some $1\leq i \leq 5$, then we use the above construction, such that for $j \leq i$ we add a ball to $A_{2j}$ if $j\leq 3$ and add a ball to $A_{2j-7}$ if $j\ge 4$.
\end{proof}

\vspace{3mm}

Now we prove our bounds on $N_3(n)$.

\vspace{2mm}

\noindent
\textbf{Theorem 4.}
\textit{(i) If $n$ is odd, then $N_3(n) \ge \frac{1}{2}\binom{n}{3}$.}

\vspace{1mm}

\textit{(ii)If $n$ is even, then $ N_3(n) \ge \frac {1}{8}	\binom{n-2}{3}$.}

\vspace{1mm}

\textit{(iii) $N_3(n)\le (\frac{5}{6}+o(1))\binom{n}{3}$.}

\vspace{1mm}

\begin{proof}[Proof \tref{degree}]

(i) To obtain the lower bound on $N_3(2k+1)$ we use a standard averaging argument. By \tref{lower} $(i)$ we know that if $\cQ$ determines a non-minority ball, then $\delta_2(\cQ)\ge k$ and therefore the number of pairs $(\{x,y\}, Q)$ with $x,y\in Q$, $Q\in \cQ$ is, on one hand, exactly $3|\cQ|$, and on the other hand is at least $k\binom{2k+1}{2}$. Rearranging, we obtain $|\cQ|\ge \frac{(2k+1)\cdot 2k \cdot k}{6} \ge \frac{1}{2}\binom{2k+1}{3}$.

(ii) We also use a standard averaging argument. Counting the number $M$ of ordered triples $(\{x,y\},\{u,v\},Q)$ with $Q\in\cQ$, $|Q\cap \{x,y\}|=|Q\cap \{u,v\}|=1$ and $x,u,y,v$ pairwise different, by \tref{lower} $(iii)$ we obtain that $M\ge \binom{n}{2}\binom{n-2}{2}(n/2-2)$ holds. On the other hand $M=6|\cQ|(n-3)(n-4)$. Taking $Q$ for which $|Q|$ is minimal, we get that $N_3(n)= |Q|\ge n(n-1)(n-4)/48\ge \frac{1}{8}\binom{n-2}{3}$.

(iii) By \tref{lower} $(iv)$, the upper bound on $N_3(n)$ follows from the existence of a query set $\cQ$ with $5n/6\le d_{\cQ}(x,y) \le 5n/6+o(n)$. The existence of such a query set can be seen easily using the random construction letting $Q\in \cQ$ with probability $5/6+n^{-1/3}$ for any triple $Q$ independently. (It is worth mentioning that there is a much stronger recent result on designs due to Keevash \cite{Kee}.)

\end{proof}

\section{Remarks on queries of larger size and other models}\label{sec:rem}
In this section we gather some information on the non-minority problem if the query size $q$ is greater than $3$, namely we prove
\tref{parka} and
\tref{parosok}. and also investigate some related models.

Throughout this section the number of colors is still always two.
Recall that in this model a query is a set of $q$ balls, the answer is either (the index of) a majority ball, or that there is no majority color in this subset, 
and that we denote the number of queries needed to determine a non-minority ball by $A_q(n)$ in the adaptive model and by $N_q(n)$ in the non-adaptive model.
As many of the proofs and algorithms are very similar to the ones we used for the case $q=3$, we do not always provide all details.

By definition, the model is quite different for odd and even values of $q$.
We first consider the case when $q$ is odd.

\vspace{3mm}

\noindent
\textbf{Theorem 5.}
We have $N_{2l+1}(n)=\Omega_l(n^3)$ and $A_{2l+1}(n)=O_l(n^2)$.

\begin{proof}
First note that we can simulate queries of size $2l+1$ by queries of size $3$.
That is, we can determine a possible answer for any query of size $2l+1$ by asking the necessary triples of the query.
This gives $N_{2l+1}(n)\ge N_3(n)/N_3(2l+1)=\Omega_l(n^3)$ using \tref{degree}.

For the adaptive case, we can modify Algorithm 2DB. Again we set aside the answer ball, add a new ball and then repeat the procedure. After $n-2l$ queries, among the $2l$ balls that have not been removed either we have $l$ red and $l$ blue balls, or all the other $n-2l$ balls have the same color. Assuming $n \ge 4l+1$, in both cases it is enough to find a non-minority ball among the other $n-2l$ balls, thus $A_{2l+1}(n) \le n-2l+A_{2l+1}(n-2l)$. Now the quadratic upper bound follows obviously if for $n\le 4l$ the problem can be solved at all.

If $n$ is even, it is enough to solve the problem for any $n-1$ of the balls; the resulting ball is in non-minority among the original set of balls.
Thus it is enough to solve the problem for $n$ odd.
For $n=2l+1$ the solution is obvious. Now we consider the case $n=2l+3$.

\begin{lemma}\label{midriff}
If $n=2l+3$ and all possible queries of size $2l+1$ have been asked, then we can find a non-minority ball.
\end{lemma}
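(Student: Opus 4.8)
The plan is to reduce the whole problem to one easily checked property of the answers. Write $n=2l+3$, so every query has the form $B\setminus\{x,y\}$ for a pair $\{x,y\}$; let $a(x,y)$ denote the answer returned for it. Since $n$ is odd, a non-minority ball is exactly a ball whose colour class has size at least $l+2$, and in every colouring there is precisely one such ``majority'' colour; call a colouring \emph{tight} if its majority class has size exactly $l+2$. The first step is the key observation: \emph{for every colouring $c$ consistent with the answers and every pair $\{x,y\}$, the ball $a(x,y)$ is non-minority in $c$ unless $c$ is tight and both $x,y$ lie in its majority class.} Indeed $a(x,y)$ has the majority colour of the $(2l+1)$-set $B\setminus\{x,y\}$, so its colour occurs at least $l+1$ times in $B$; it occurs fewer than $l+2$ times only if it occurs exactly $l+1$ times and $c(x),c(y)$ are both the other colour, which forces that other colour to occur $l+2$ times and be the majority. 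From this I would extract a certificate: if some ball $v$ equals $a(x,y)$ for a collection of pairs whose union has at least $l+3$ elements, then $v$ is non-minority in \emph{every} consistent colouring (in a tight one all those pairs would have to lie inside the $(l+2)$-element majority class, which is too small).

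Next, the case analysis. If no consistent colouring is tight (all are monochromatic or have majority class of size $\ge l+3$), then by the observation $a(x,y)$ for any single pair is non-minority in every consistent colouring, so we output it. Otherwise let $\mathcal M$ be the family of minority sets (each of size $l+1$) of the tight consistent colourings. A short computation shows that an $(l+1)$-set $M$ belongs to $\mathcal M$ if and only if $\{\,\{x,y\}: a(x,y)\in M\,\}$ equals $\binom{B\setminus M}{2}$, so $\mathcal M$ is computable from the answers; moreover, by the observation it suffices to output $a(x,y)$ for a single pair $\{x,y\}$ that meets every member of $\mathcal M$.

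I would then split into three sub-cases. (a) If two members $M_1,M_2\in\mathcal M$ are disjoint, then $B\setminus(M_1\cup M_2)=\{z\}$ is a single ball, and using the defining property of $M_1$ and of $M_2$ one gets $a(u,v)=z$ for every ``crossing'' pair $u\in M_1$, $v\in M_2$; these pairs cover $M_1\cup M_2$, of size $2l+2\ge l+3$, so the certificate above applies and we output $z$. (b) If $\mathcal M$ has a common element $z$, output $a(z,w)$ for an arbitrary $w\neq z$: in any tight consistent colouring the minority set lies in $\mathcal M$, hence contains $z$, so $z$ is minority-coloured and the forbidden situation in the observation cannot arise. (c) Otherwise $\mathcal M$ is an intersecting family of $(l+1)$-subsets of a $(2l+3)$-set with no common element; the counting bound $3\binom{l+2}{2}<\binom{2l+3}{2}$ settles $|\mathcal M|\le 3$, and in general one exploits the identity $a^{-1}(M_1\cap M_2)=\binom{B\setminus(M_1\cup M_2)}{2}$, valid for all $M_1,M_2\in\mathcal M$ (it follows from the defining property applied to $M_1$ and $M_2$), to pin down the structure of $\mathcal M$ finely enough to locate a pair hitting all of it.

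The main obstacle is exactly sub-case (c): showing that the $(l+2)$-subsets $\{B\setminus M:M\in\mathcal M\}$ cannot cover all pairs of $B$. The elementary counting leaves only a slack of about $l+1$ pairs, so the proof there must invoke the structural identity above (or an ad hoc argument about intersecting families of near-half-size sets); the rest of the argument is routine.
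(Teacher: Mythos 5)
Your framework matches the paper's up to sub-case (c), which you acknowledge is unfinished: the ``key observation,'' the certificate, and the characterization $M\in\mathcal{M}$ iff $\{\{x,y\}\colon a(x,y)\in M\} = \binom{B\setminus M}{2}$ are all correct (the paper calls this candidate family $\mathcal{F}$), and sub-cases (a) and (b) check out. You have even identified the crucial structural fact: writing $\mathcal{M}_w$ for the members of $\mathcal{M}$ containing $w$, for any pair $\{x,y\}$ with $c=a(x,y)$ one has $\mathcal{M}_c=\mathcal{M}\setminus(\mathcal{M}_x\cup\mathcal{M}_y)$, which is precisely the content of the paper's Lemma~\ref{upperdecker}. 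But sub-case (c) is not a corner case: by this identity, locating a pair that hits every $M\in\mathcal{M}$ is equivalent to locating an answer ball $z$ with $\mathcal{M}_z=\emptyset$, which is the original problem in disguise, and the ``pairwise intersecting, no common element'' hypothesis does not materially constrain it. Also, the phrase ``settles $|\mathcal{M}|\le 3$'' does not establish any bound on $|\mathcal{M}|$, and no such bound is proved (nor is one needed in the paper's argument).

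The paper closes the gap by iterating the identity rather than by covering-number counting. Assume for contradiction that every answer ball lies in some member of $\mathcal{M}$. Pick $a,b$ in a common $M\in\mathcal{M}$. Let $c=a(B\setminus\{a,b\})$; then $\mathcal{M}_c=\mathcal{M}\setminus(\mathcal{M}_a\cup\mathcal{M}_b)$ and this set is nonempty (since $c$ is an answer ball, so lies in some $F\in\mathcal{M}$, and that $F$ avoids $a,b$). Apply the identity to $a,c$ to get $d$ with $\mathcal{M}_d=\mathcal{M}\setminus(\mathcal{M}_a\cup\mathcal{M}_c)$; one has $d\ne b$ because $M\in\mathcal{M}_a\cap\mathcal{M}_b$ while $M\notin\mathcal{M}_d$. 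Since $\mathcal{M}_a,\mathcal{M}_c,\mathcal{M}_d$ partition $\mathcal{M}$ and also $\mathcal{M}_a\cup\mathcal{M}_b\cup\mathcal{M}_c=\mathcal{M}$, it follows that $\mathcal{M}_d\subseteq\mathcal{M}_b$. A third application, to $b,d$, gives $e$ with $\mathcal{M}_e=\mathcal{M}\setminus(\mathcal{M}_b\cup\mathcal{M}_d)=\mathcal{M}\setminus\mathcal{M}_b$, hence $\mathcal{M}_b\cup\mathcal{M}_e=\mathcal{M}$, contradicting the strict inclusion established above. This three-step iteration is the idea your proposal is missing; without it the argument does not conclude.
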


\begin{proof} We will assume that the color classes contain $l+1$ or $l+2$ balls, and make sure that our output is a ball that appears as an answer to a query.
If the sizes of the color classes are more unbalanced, then no ball from the smaller class can appear as an answer to a query, hence our ball is still non-minority.

Let us assume that we cannot find a non-minority ball.
If a set $S$ of $l+1$ elements is the minority class, it has the following property: an answer to a query is in $S$ if and only if the query contains $S$.
Let $\mathcal{F}$ be the family of sets $S$, $|S|=l+1$, with the property that for a query $Q$ if the answer to $Q$ belongs to $S$, then $S \subset Q$ holds.
Sets in $\cF$ are the \textit{candidates} for being the minority set.
The assumption that we cannot find a non-minority ball means that every ball that has appeared as an answer for a query is contained in a member of $\mathcal{F}$.
For any ball $b$ we define $\cF_b=\{F \in \cF: b\in F\}$.

\begin{lemma}\label{upperdecker}
1. $\cF_a\cup \cF_b\subsetneq \mathcal{F}$ for every two balls $a,b$.

2. For every two balls $a,b$, there is a third ball $c$, such that $\cF_c=\mathcal{F}\setminus (\cF_a \cup \cF_b)$ holds.
\end{lemma}

\begin{proof} Let us consider two arbitrary balls $a$ and $b$, and the query $Q$ that avoids them. Let $c$ be the answer to $Q$. There must be a member $F$ of $\mathcal{F}$ containing $c$ (otherwise $c$ is a non-minority ball), and then $F \subset Q$ by the definition of $\mathcal{F}$. This proves 1. To prove 2, note that, by the definition of $\cF$ applied to $c$ and $Q$, every member of $\mathcal{F}$ that contains $c$ must be a subset of $Q$ showing $\cF_c\cap (\cF_a \cup \cF_b)=\emptyset$. On the other hand if a candidate set is a subset of $Q$ it must contain the answer to $Q$.
\end{proof}

To finish the proof of Lemma \ref{midriff}, let us consider two elements $a$ and $b$ of a set $F\in \mathcal{F}$.
By Lemma \ref{upperdecker} there is a ball $c$ with $\cF_c=\cF\setminus (\cF_a\cup \cF_b)$. Similarly, for $a$ and $c$ we can find another ball $d$ with $\cF_d=\cF\setminus (\cF_a\cup \cF_c)$. We know $d$ cannot be the same as $b$, since $F\in \cF_a,\cF_b$. Every member of $\mathcal{F}$ belongs to exactly one of $\cF_a$, $\cF_d$ and $\cF_c$, and also to at least one of $\cF_a$, $\cF_b$ and $\cF_c$. This implies $\cF_d \subseteq \cF_b$. Now for $b$ and $d$ we can find a third ball $e$ with $\cF_e=\cF\setminus (\cF_d\cup \cF_b)$, but then $\cF=\cF_b\cup \cF_e$ holds -- a contradiction.
\end{proof}

Returning to the proof of \tref{parka}, we can therefore simulate queries of size $2l+3$ by queries of size $2l+1$ using Lemma \ref{midriff}.
Now using the lemma for $n=2l+5$ and query size $2l+3$ we can also simulate queries of size $2l+5$ by queries of size $2l+3$, and thus by queries of size $2l+1$. A repeated application of this argument finishes the proof for any odd value of $n$.
\end{proof}

The algorithm can be improved by reusing some queries between different phases if we ask them in a ``binary tree'' branching way.
We do not go into details, as we could only achieve an $\tilde O(n^{\frac 32})$ bound, while we conjecture that an $O(n)$ bound holds also in this case.\\


\noindent
\textbf{Theorem 6.}
We have $N_{2l}(n) \leq n(n-2l)$ and $A_{2l}(n) \leq n-2l+1$.

\begin{proof}
We give a non-adaptive algorithm of length at most $n(n-2l)$. Let the balls be $b_1, b_2, \ldots , b_n$ and let us consider the queries \[\{ b_1, b_2, \ldots , b_{2l} \}, \{ b_2, b_3, \ldots , b_{2l+1} \}, \ldots , \{ b_n, b_1, \ldots , b_{2l-1} \}.\] If the answer to all these queries is a ball in majority, then all the answer balls have the same color (otherwise there should be a query for which the answer is that there is no majority in the set), and it is easy to see that any of these balls is a majority ball in the whole set. Thus we may assume that there is no majority in (say) $\{ b_1, b_2, \ldots , b_{2l} \}$. Now for $2l+1 \le i \le n$ let us consider the query $Q_i = \{ b_2, b_3, \ldots , b_{2l}, b_i \}$. If the answer to $Q_i$ is a ball, then $b_1$ and $b_i$ have different colors, while if the answer is that there is no majority in $Q_i$, then $b_1$ and $b_i$ have the same color. That is, using $n-2l$ further queries we can explore the whole distribution of the colors in $b_{2l+1}, b_{2l+2}, \ldots , b_n$ and now to show a non-minority ball is straightforward. Since we need the further $n-2l$ queries for every possible sets of the first round, the non-adaptive algorithm uses $n(n-2l)$ queries altogether, since we counted twice the \textit{interval} queries (e.g. $\{ b_2, b_3, \ldots , b_{2l+1} \}$ once at the beginning and then as $Q_{2l+1}$).

For the adaptive case let us start our algorithm with an arbitrary  query $S$ and suppose that the answer is a ball $s$. Then let the next query be $S'=S \setminus\{s\}\cup \{s'\}$, where $s' \not\in S$. If the answer to this query is a ball $b$, then $b$ and $s$ must have the same color. In this case we continue the process by taking a new ball $s'' \not\in S\cup \{s'\}$ and $S''=S'\setminus \{b\}\cup \{s''\}$. We do this (always deleting the answered ball from the current set and adding a new ball) until the answer is not a majority ball or there are no more balls to add. In the latter case we used $n-2l+1$ questions and a non-minority ball is obvious to show. Otherwise, when we stop in the current set $S_1$ of size $2l$ there is no majority ball and the set $S_2$ of all balls deleted so far (having size $i$ for some $0\le i \le n-2l$) is such that for some $a\in S_1$ (the ball added last to $S_1$) and for any $b\in S_2$ we know that $a$ and $b$ have different colors. Now it is easy to learn whether a ball $c\in B\setminus S_1 \setminus S_2$ has the same color as $a$: we ask the query $S_1\setminus \{a\} \cup \{c\}$, just like in the second phase of the non-adaptive algorithm we have just seen. That is, after using $n-2l+1$ questions altogether, we can easily show a non-minority ball again.
\end{proof}

\subsection*{Non-minority answers}
The huge difference between the odd and even cases motivates us to consider the model where the answer to a query is simply a non-minority ball.
If the query size $q$ is odd, then a non-minority answer is simply a majority answer, thus the above results remain true in this case, in particular, by asking all queries we can determine a non-minority ball. If $q=2$ or $q=4$ and $n$ is arbitrary or $q$ is even and $n$ is odd, then this is not possible by Lemma \ref{nonmin} below.
Finally, the remaining case, when $q>4$ and $n$ are both even, is handled for $n$ large enough by \tref{eveneven} (the proof of which is postponed to the Appendix), showing that in this case we can again find a non-minority ball.


\begin{lemma}\label{nonmin} Assume that $q=2$, or $q=4$ and $n>2$ is arbitrary, or $q$ is even and $n$ is odd.
Even if we know a non-minority ball in all ${n \choose q}$ possible $q$-tuples, it is possible that we cannot show a non-minority ball.
\end{lemma}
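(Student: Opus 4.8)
The plan is to use \oref{cover} throughout: for each of the listed cases it is enough to fix one system of answers to all $\binom nq$ queries together with a family of legal colorings whose minority sets cover $B$ (producing such a \emph{covering sub}family suffices, since passing to the family of \emph{all} legal colorings only enlarges the union). The case $q=2$ is immediate: in a $2$-set both balls are always non-minority balls, so every coloring is legal no matter how we answer; and when $n>2$, coloring a ball $b$ red and all other balls blue makes $b$ a minority ball, so the minority sets of legal colorings already cover $B$.

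For the main case, $n$ odd and $q$ even arbitrary (this subsumes $q=4$ with $n$ odd), I would use three ``almost balanced'' colorings that are mutually close. Write $B=R_0\sqcup B_0\sqcup\{z\}$ with $|R_0|=|B_0|=(n-1)/2$, pick a ball $z'\in R_0$, and set $R_0'=R_0\setminus\{z'\}$. Let $c_R$, $c_B$, $c_z$ be the colorings whose red classes are $R_0$, $R_0\cup\{z\}$ and $R_0'\cup\{z\}$ respectively; since $n$ is odd, their minority sets are exactly $R_0$, $B_0$ and $R_0'\cup\{z\}$, whose union is $B$. It remains to answer each query $Q$ by a single ball $x\in Q$ that is a non-minority ball of $Q$ simultaneously under $c_R$, $c_B$ and $c_z$. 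Setting $r=|Q\cap R_0|$, $\zeta=|Q\cap\{z\}|$ and $r'=|Q\cap R_0'|\in\{r-1,r\}$, the numbers of red balls of $Q$ equal $r$, $r+\zeta$ and $\zeta+r'$ under $c_R$, $c_B$, $c_z$; these three values lie within $1$ of one another and the three red classes pairwise differ in at most two balls, so a short case analysis on $r$ versus $q/2$ (splitting according to whether $z\in Q$ and whether $z'\in Q$) always produces such an $x$: in each case a ball of $Q\cap B_0$, a ball of $Q\cap R_0'$, or $z$ itself turns out to be a common non-minority answer. Then \oref{cover} gives the claim.

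For $q=4$ and $n$ even (say $n\ge 6$; the remaining tiny values of $n$ are degenerate and can be checked directly), I would instead partition $B$ into three parts $P_1,P_2,P_3$ each of size at most $n/2-1$, and take $c_i$ to color $P_i$ red, so $M_{c_i}=P_i$ and the minority sets cover $B$. For a $4$-element query $Q$ the numbers $|Q\cap P_i|$ sum to $4$, hence are not all $\le 1$; running through the profiles $(4,0,0)$, $(3,1,0)$, $(2,2,0)$, $(2,1,1)$ one checks that some ball of $Q$ is a non-minority answer under all three colorings (take a ball of a part meeting $Q$ in at least two balls; if there is none, some part is disjoint from $Q$ and the remaining ties make every ball of $Q$ valid), and \oref{cover} applies once more.

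The step I expect to be the real obstacle is precisely the case $n$ odd with $q\ge 6$: the obvious candidates --- three roughly equal color classes, or a cyclic family of arc-colorings of length about $n/2$ --- fail, because a query distributed evenly among the relevant pieces forces the non-minority answer to lie outside every minority class at once. The point of the construction in the second paragraph is that the three colorings differ pairwise in at most two balls, which is exactly what guarantees a single common non-minority answer for every query; verifying this (the short case analysis) and dealing with the small values of $n$ is then routine bookkeeping.
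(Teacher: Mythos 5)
Your proof is correct but, for the main case ($n$ odd, $q$ even), it takes a noticeably different and more involved route than the paper's. The paper reuses verbatim the idea you apply to $q=4$: partition $B$ into three parts $G_1,G_2,G_3$ each of size strictly below $n/2$, here with $|G_1|=|G_2|=(n-1)/2$ and $|G_3|=1$; answer every query by a ball from a part $G_i$ meeting $Q$ in at least $q/2$ elements (the singleton $G_3$ forces one of $G_1,G_2$ to dominate); and observe that the three colorings in which some $G_i$ is the minority color class are all legal, with minority sets $G_1,G_2,G_3$ covering $B$. You instead keep two of those colorings (minority sets $R_0=G_1$ and $B_0=G_2$) but replace the third (minority set $\{z\}=G_3$) by your near-balanced $c_z$ with minority set $R_0'\cup\{z\}$; as a consequence your answer rule has to branch on whether $z$ and $z'$ lie in $Q$, and the justification leans on the three red classes being pairwise within two balls of one another. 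Your case analysis does go through for $q\ge 4$, but it is more delicate to verify than the paper's one-line \emph{answer from a dominant part} rule, which moreover unifies the $q=4$ and $q$-even/$n$-odd cases. Your instinct that three roughly equal color classes fail is right, but it led you past the genuinely unbalanced partition the paper uses, which is the cleaner fix for the same obstacle. Your $q=2$ and $q=4$/$n$-even arguments coincide with the paper's. (A small caveat shared with the paper: both proofs implicitly need $n>q$; for $n=q=4$ the unique query's answer is itself a non-minority ball of $B$, so the lemma as literally stated fails there.)
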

\begin{proof}
If $q=2$, trivially no answer gives any information about the balls, and so it is impossible to determine a non-minority ball.

For $q=4$ and $n>2$ even, we partition thee balls into three groups of size $<n/2$, any query intersects at least one group in two elements, we always answer one of these elements. This way all colorings in which a group is monochromatic is valid, and so, by \oref{cover}, it is not possible to determine a non-minority ball.

For $n$ odd and $q$ even we partition tha balls into two groups of size $(n-1)/2$ and a group with one element, as in the previous case any query intersects at least one group in $q/2$ elements, we always answer one of these elements. This way again all colorings in which a group is monochromatic is valid, and so, by \oref{cover}, it is not possible to determine a non-minority ball.
\end{proof}

The proof of the next theorem is postponed to Appendix A.

\begin{theorem}\label{thm:eveneven} Let $n\ge q^3$ be even and $q>4$ be even.
If we know a non-minority ball in all ${n \choose q}$ possible $q$-tuples, we can find a non-minority ball.
\end{theorem}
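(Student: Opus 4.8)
The plan is to work inside the adversary framework of \oref{cover}, suitably reinterpreted. Since every $q$-tuple is queried and each answer is a non-minority ball of its tuple, the query family fails to expose a non-minority ball exactly when one can fix, for every $q$-tuple $Q$, an answer $x(Q)\in Q$ such that the minority sets of the resulting \emph{legal} colourings cover $B$; here a colouring $c$ is legal if $x(Q)$ is a non-minority ball of $Q$ with respect to $c$ for every $Q$, and its minority set $M_c$ (the strictly smaller colour class) has $|M_c|\le n/2-1$ because $n$ is even. Assume such an answer function $x$ exists, and fix a \emph{minimal} family $\cC=\{c_1,\dots,c_m\}$ of legal colourings with $B=\bigcup_{i\in[m]}M_i$, where $M_i:=M_{c_i}$; by minimality each $M_i$ owns a ball $b_i$ that lies in no other $M_j$. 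The goal is to exhibit one particular $q$-tuple $Q$ (which is queried, as all of them are) having no valid answer at all, contradicting $x(Q)\in Q$.

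Legality gives two ways to disqualify a ball $y\in Q$ as the answer to $Q$: (a) if some legal colouring $c$ has $y\in M_c$ and $|Q\cap M_c|<q/2$, then $y$ is a minority ball of $Q$ under $c$; (b) if some legal colouring $c$ has $y\notin M_c$ and $|Q\cap M_c|>q/2$, then $x(Q)$ is forced to lie in $M_c$ and hence cannot be $y$. Mechanism (a) alone settles two ranges of $m$. We cannot have $m\le 2$, since two sets of size at most $n/2-1$ cannot cover an $n$-set. And if $m\ge q/2+2$, then letting $Q$ consist of any $q/2+2$ of the balls $b_i$ together with $q/2-2$ further arbitrary balls, every $M_i$ meets $Q$ in at most $1+(q/2-2)=q/2-1$ balls, so \emph{every} ball of $Q$ is disqualified by (a). Hence from now on $3\le m\le q/2+1$.

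For this remaining range the first attempt is to produce, using only balls of the private regions $R_{\{i\}}=M_i\setminus\bigcup_{j\ne i}M_j$, a $q$-tuple with $|Q\cap M_i|\le q/2-1$ for every $i$ — which again disqualifies all of $Q$ by (a). This succeeds as soon as $\sum_i\min(|R_{\{i\}}|,\,q/2-1)\ge q$, and the inequality $m(q/2-1)\ge q$ (valid for $m\ge 3$, $q\ge 6$) makes it plausible; moreover, applying inclusion--exclusion to $\bigcup M_i=B$ shows that a private region can be small only at the cost of forcing \emph{other} Venn regions of $M_1,\dots,M_m$ to be large, so that $Q$ can then be completed from those regions with every load kept below $q/2$. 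In the residual configurations where this still does not go through one invokes mechanism (b): a $q$-tuple containing a strict majority of several of the $M_i$ forces $x(Q)$ into their common intersection, which can be made to miss $Q$ by a suitable choice (again the size and covering constraints are what guarantee the needed balls exist). The bound $n\ge q^3$ is used throughout this step to ensure that every region the constraints force to be non-empty in fact contains as many balls as we must draw from it.

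I expect this last step — the explicit construction of the bad $q$-tuple in the small-private-region configurations — to be the main obstacle, as it seems to require a case analysis on which of the up to $2^m-1$ Venn regions of $M_1,\dots,M_m$ are large, combining the purely set-theoretic constraints with the legality of individually chosen colourings; for this reason the full argument belongs in the appendix. A global fractional computation is encouraging (assigning weight $q/n$ to every ball puts weight $q|M_i|/n\le q/2-q/n<q/2$ on each $M_i$), but a crude random rounding is too lossy, so the construction must deliberately overfill the private regions and underfill the regions common to many of the $M_i$. Both hypotheses $q>4$ and $n$ even are genuinely needed, since Lemma~\ref{nonmin} shows the statement fails for $q=4$ and for odd $n$.
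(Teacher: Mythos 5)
Your setup is correct and matches the paper's: you reduce to a minimal covering family $\cC$ of minority sets, isolate private balls $b_i$, and aim to construct a $q$-tuple $Q$ on which every ball is disqualified as an answer. Your ``mechanism (a)'' is precisely the paper's criterion $(\bullet)$, and your disposal of $m\le 2$ (by size) and $m\ge q/2+2$ (take $q/2+2$ private balls plus $q/2-2$ others) is clean --- in fact your threshold $q/2+2$ is slightly sharper than the paper's initial bound $|\cF'|\ge q$.

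However, the proof has a genuine gap exactly where you flag it: the range $3\le m\le q/2+1$ is not resolved, and you acknowledge this by saying the ``explicit construction of the bad $q$-tuple in the small-private-region configurations'' is the main obstacle and gesturing at a ``case analysis on which of the up to $2^m-1$ Venn regions are large.'' That vague Venn-region analysis is not carried out, and the invocation of mechanism (b) (forcing $x(Q)$ into a large $M_c\cap Q$) is never made concrete. The paper closes this gap with a specific structural tool you do not have: starting from the minimal cover $\cF'$, it \emph{iteratively deletes} any member with at most $q/2$ private balls together with those private balls. This pruning leaves a subfamily $\cG$ in which every member has more than $q/2$ private balls, while only $O(q^2)$ balls in total have been removed (at most $q/2$ per deleted set, fewer than $q$ sets). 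The residual case analysis is then on $|\cG|\in\{0,1\},\,2,\,\ge 3$, and in each case the guaranteed abundance of private balls (plus the size bound $|F|\le n/2-1$ and the assumption $n\ge q^3$ to make a few arithmetic inequalities go through) lets one assemble $Q$ explicitly. Notably, the paper never needs your mechanism (b) at all; everything is done with criterion $(\bullet)$. Without this deletion step or an equivalent device that controls how private balls distribute across $M_1,\dots,M_m$, the middle range of $m$ remains unproven, so the argument as written is incomplete.
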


Of course, it would be interesting to know whether \tref{eveneven} holds for all even $n$.
For this to hold, it would be sufficient to prove it for $n=q+2$, similarly as was done for \tref{parka} in Lemma \ref{midriff}.
Suppose that there is a collection $\mathcal{F}$ of minority sets, where the size of each set is at most $q/2$ and they cover each of the $n$ elements.
If a query $Q$ of size $q=n-2$ contains all elements but $x$ and $y$, then the answer to $Q$ can be $z$ if and only if for every $F\in \mathcal{F}$ with $z\in F$ we have $|F\setminus\{x,y\}|=q/2$.
This can be translated to the language of hypergraphs as follows.

A set $D$ of vertices is {\em dominating} in a hypergraph $\mathcal{H}$ if  for every vertex $v$ there exists an $H \in \cH$ with $v \in H$ and $H \cap D \neq \emptyset$.
The {\em dominating number} $\gamma(\mathcal{H})$ is the smallest number for which there is a dominating vertex set of size $\gamma(\mathcal{H})$.
In our case, the edges of $\mathcal{H}$ will be the sets of size {\em exactly} $q/2$ from $\mathcal{F}$, and the vertices of $\mathcal{H}$ the elements that are contained in such an edge.
Then a query $Q$ not containing $x$ and $y$ has an answer if and only if $\{x,y\}$ does not form a dominating set.
Therefore, $\mathcal{F}$ is a collection of minority sets if and only if for the corresponding hypergraph $\mathcal{H}$ we have $\gamma(\mathcal{H})\ge 3$.
This gives the following equivalent reformulation of our problem.

\begin{problem} Is it true that $\gamma(\mathcal{H})\le 2$ if $\mathcal{H}$ is $k$-uniform for some $k\ge n/2-1$?
\end{problem}

After the first version of this article appeared on arXiv, this problem was answered in the negative in \cite{BPTV}.

\subsection*{Arbitrary ratio model}

We finish this section with the introduction of a more general model. For some $0<\alpha \le 1/2$, we say that a ball $b \in B$ is an {\em $\alpha$-ball} of the set $B$ if there are at least $\alpha(|B|-1)$ other balls in the set that have the same color as $b$.
Similar density queries were considered in the group testing model in \cite{GKPW}.
Notice that a $\frac 12$-ball is just a majority ball while in a query of size $q$ for $\alpha=\frac 12\frac {q-2}{q-1}$ an $\alpha$-ball is exactly a non-minority ball.
In this new model, the queries are  subsets of size $q$ of the set of $n$ balls colored with two colors and the answer we obtain to a query $Q$ is an $\alpha$-ball of $Q$ (which always exists if we have only two colors).

Our goal is to find a ball having the same color as many other balls as possible.
Let $A$ be the largest integer, such that $\lceil \frac qA\rceil-1\ge \alpha(q-1)$ holds.


\begin{prop}\label{prop:akkorisezazerosebb} For any $q$,$n$ and $0 < \alpha\le \frac12$, if $A$ divides $n$ or $q$, or $(n \mod A) < (q \mod A)$, then even asking all possible ${n \choose q}$ queries, it is possible that we cannot show a $\frac 1A$-ball of the full set.
\end{prop}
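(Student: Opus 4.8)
\textbf{Proof proposal for Proposition~\ref{prop:akkorisezazerosebb}.}
The plan is to exhibit, in each of the three cases, a partition of the $n$ balls into $A$ color classes, each of size strictly less than what a $\frac1A$-ball requires, together with an adversary answering strategy that keeps \emph{every} such "block-monochromatic" coloring legal, so that by (the obvious $q$-uniform analogue of) \oref{cover} no ball can be certified as a $\frac1A$-ball. Concretely, a ball is a $\frac1A$-ball of the whole set iff its color class has size at least $\frac1A(n-1)+1$, i.e. roughly $n/A$; so I want to split $[n]$ into $A$ parts each of size $\le \lceil n/A\rceil-1$ when that is possible, and argue the adversary can always answer each $q$-query with a ball lying in a part that the query hits in "enough" elements.

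First I would pin down the combinatorial mechanism. Recall $A$ is the largest integer with $\lceil q/A\rceil-1\ge\alpha(q-1)$; equivalently, a ball that is in a color class meeting a $q$-query $Q$ in at least $\lceil q/A\rceil$ elements is an $\alpha$-ball of $Q$. By pigeonhole, any $q$-query split among $A$ classes meets some class in at least $\lceil q/A\rceil$ elements, so the adversary can always answer a ball from that class; hence \emph{any} coloring that is constant on each of the $A$ parts is legal for the adversary's strategy "answer a ball from a most-represented part of $Q$". The only thing left is to choose the $A$ parts of $[n]$ so that each has size $\le\lceil n/A\rceil-1$, because then no part is large enough to make its balls $\frac1A$-balls of $[n]$, and since the legal colorings between them realize each part as a minority-type class, their minority sets cover $[n]$ and no answer is ever forced.

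Next I would handle the three cases, each just a counting check on sizes.
\begin{itemize}
\item If $A\mid n$, take $A$ parts of size exactly $n/A$. A $\frac1A$-ball needs a class of size $\ge\frac1A(n-1)+1>n/A$, so no part qualifies; done.
\item If $A\mid q$ but $A\nmid n$, then $\lceil q/A\rceil=q/A$, so being an $\alpha$-ball of $Q$ requires a class meeting $Q$ in at least $q/A$ elements, yet a $q$-query split into $A$ nonempty-ish parts always does meet some part in $\ge q/A$; now take parts of sizes as equal as possible, all $\le\lceil n/A\rceil$; one checks $\lceil n/A\rceil-1<\frac1A(n-1)+1$ fails only in a boundary situation that the $A\mid q$ hypothesis rules out, after which no part is a $\frac1A$-block of $[n]$.
\item If $(n\bmod A)<(q\bmod A)$, this is exactly the inequality guaranteeing that an equal-as-possible partition of $[n]$ into $A$ parts has every part of size $\le\lceil n/A\rceil-1$ (or one can align the "remainder" parts of $[n]$ with short parts of $Q$), which again makes every part too small to certify a $\frac1A$-ball.
\end{itemize}
In each case the adversary strategy above makes all $A$ block-monochromatic colorings legal, their minority sets cover $[n]$, and so by \oref{cover} (applied to $q$-uniform queries) even all $\binom nq$ queries fail to produce a $\frac1A$-ball.

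The main obstacle is the bookkeeping in the mixed case $A\mid q$, $A\nmid n$ (and, symmetrically, in the condition $(n\bmod A)<(q\bmod A)$): one must verify that the "equal-as-possible" split of $[n]$ into $A$ blocks really does keep every block size below the $\frac1A(n-1)+1$ threshold \emph{and} that every $q$-query, however it distributes across these blocks, still hits some block in at least $\lceil q/A\rceil=\lceil q/A\rceil$ elements so that the adversary's answer stays inside a block. This amounts to comparing $\lceil n/A\rceil-1$ with $\lceil\frac{n-1}{A}\rceil$ and $\lceil q/A\rceil$ with $q/A$ under the stated divisibility/remainder hypotheses — elementary but needing care with floors and ceilings; I would isolate it as a short numeric lemma and then feed it into the \oref{cover}-style argument uniformly.
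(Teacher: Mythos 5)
Your core idea is the right one and matches the paper's: make the adversary answer every $q$-query with a ball from a block that the query hits in at least $\lceil q/A\rceil$ elements, so that every block-monochromatic coloring stays legal, and then arrange the blocks small enough that none of them can be a $\frac1A$-ball's colour class. But the way you set up the blocks does not actually work, and the place it breaks is precisely the place you flag as ``needing care with floors and ceilings.''

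The concrete problem is your insistence on partitioning $[n]$ into exactly $A$ parts. First, the target size $\le\lceil n/A\rceil-1$ for each part is arithmetically impossible whenever $A\nmid n$ (the parts would sum to at most $A\lceil n/A\rceil-A=n-(n\bmod A)<n$), so the fallback is ``parts as equal as possible,'' some of size $\lceil n/A\rceil$. But a block $G_i$ of size $\lceil n/A\rceil$ fails to rule out $\frac1A$-balls in the coloring where $G_i$ is the only monochromatic class of its colour: a ball of $G_i$ has $\lceil n/A\rceil-1=\frac{n-(n\bmod A)}{A}$ others of its colour, and this is $\ge\frac{n-1}{A}$ exactly when $n\bmod A\le 1$. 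So for $n\equiv 1\pmod A$ the big block does contain $\frac1A$-balls, and this case is \emph{not} excluded by the hypothesis $A\mid q$ (take $q=6$, $\alpha=\frac13$, $A=2$, $n=7$: your blocks would be of sizes $4$ and $3$, and the block of size $4$ is already large enough). The hypotheses of the proposition simply do not say anything about $n\bmod A$ directly, so ``a boundary situation that the $A\mid q$ hypothesis rules out'' is not a valid escape.

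The paper avoids this by not forcing all $n$ balls into the $A$ blocks. It takes $A$ blocks of equal size $\lfloor n/A\rfloor$ and leaves the remaining $r:=n\bmod A$ balls loose, outside every block. These loose balls never appear as answers, and for each of them (and each block ball) there is a legal block-monochromatic coloring in which it is not a $\frac1A$-ball. The price is that the pigeonhole you invoke — ``a $q$-query spread over $A$ blocks hits some block $\ge\lceil q/A\rceil$ times'' — no longer applies directly, because some of the $q$ balls may be loose. Here is where the arithmetic hypotheses do real work: a query can have at most $\lceil q/A\rceil-1$ balls from each block and at most $r$ loose balls, so the adversary strategy is well-defined provided $A(\lceil q/A\rceil-1)+r<q$, i.e.\ $r\le q-A(\lceil q/A\rceil-1)-1$. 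This bound equals $A-1$ when $A\mid q$ and equals $(q\bmod A)-1$ otherwise, which is exactly the content of the three cases ``$A\mid n$, or $A\mid q$, or $(n\bmod A)<(q\bmod A)$.'' So the fix for your proof is: replace the equal-as-possible partition into $A$ parts by $A$ parts of size $\lfloor n/A\rfloor$ plus $n\bmod A$ unassigned balls, and replace the crude pigeonhole with the refined count above; the ``short numeric lemma'' you deferred is precisely that count.
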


Note that if $q$ is odd and $\alpha=\frac 12$, then $A=2$ and we can determine a $\frac 1A$-ball for odd values of $n$, showing that the above conditions are needed.
Also note that in the non-minority ball model, for $q=4$ we have $A=3$, but for $q>4$ we have $A=2$, which explains why we could find a non-minority ball only for $q>4$ and $n$ even  in \tref{eveneven}.

\begin{proof}
First let us consider the case when $n$ is any multiple of $A$.
In this case let us divide the balls into $A$ equal-sized sets $G_1,\ldots,G_A$.
The adversary reveals that the balls in the same group have the same color.
Any query contains at least $\lceil \frac qA\rceil$ balls from at least one of the groups, let this group be $G_i$. Any of these balls can be the answer to the query, as each have the same color as  $\lceil \frac qA\rceil-1\ge \alpha(q-1)$ other balls. 
That is, receiving the answers it is possible that all balls have the same color and it is also possible that
in group $G_i$ there are only red balls and all the other balls are blue, therefore we cannot show a $\frac 1A$-ball.

In the above construction, if we add at most $q-A\cdot (\lceil \frac qA\rceil -1) -1$ further balls to our set, we still have the property that any query contains at least $\lceil \frac qA\rceil$ balls from one group. (Otherwise we would have at most $\lceil \frac qA\rceil -1$ balls from each group in the query and therefore at most a total of $A\cdot (\lceil \frac qA\rceil -1) + (q-A\cdot (\lceil \frac qA\rceil -1) -1)=q-1$ balls, a contradiction.)
It is easy to check that this settles all the remaining cases.
\end{proof}


The natural goal would be to find an $\alpha$-ball of the full set. As $\alpha \le \frac 1A$, \pref{akkorisezazerosebb} answers this question only for those values of $\alpha$ and $q$, where equality holds. The following theorem (the proof of which is postponed to the Appendix) shows that we can find an $\alpha$-ball for every other instances of $q$ and $\alpha$ if $n$ is large enough.
Moreover, $\frac 1A$ cannot be replaced by any smaller number.
To see this, we say that a ball $b\in B$ is a {\em $c$-almost} $\alpha$-ball of the set $B$ if there are at least $\alpha (|B|-1)-c$ other balls of the same color, where $c$ is some constant.

\begin{theorem}\label{thm:alfa}  If $n$ is large enough, after asking all possible ${n \choose q}$ queries, we can show a {\em $c_q$-almost} $\frac 1{A}$-ball of the full set,
where $c_q$ is a constant depending only on $q$.
\end{theorem}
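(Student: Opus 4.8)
The plan is to argue by contradiction, adapting \oref{cover} from the non-minority setting to the ``$c_q$-almost $\frac1A$-ball'' setting exactly as in the proof of \tref{lower}(iv), but producing a single query that has no legal answer. Fix a constant $c_q$ (to be chosen large enough in terms of $q$) and assume $n$ is large. Suppose that after all $\binom nq$ queries have been asked, no ball is a $c_q$-almost $\frac1A$-ball of $B$ under every legal colouring. Then for each ball $b$ there is a legal colouring $c_b$ under which the colour class of $b$ has size at most $\frac{n-1}{A}-c_q$; call that class $S_b$. The sets $S_b$ cover $B$, so we may extract a minimal subcover $S_1,\dots,S_m$, with associated legal colourings $c_1,\dots,c_m$ and \emph{private} elements $b_i\in S_i\setminus\bigcup_{j\neq i}S_j$. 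Since every $|S_i|\le\frac{n-1}{A}-c_q<\frac nA$ while $\bigcup_i S_i=B$, for $n$ large we obtain $m\ge A+1$.

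Let $\beta$ be the least integer such that a ball of a $q$-element query $Q$ is an $\alpha$-ball of $Q$ as soon as its colour class inside $Q$ has at least $\beta$ elements; thus $\beta=\lceil\alpha(q-1)\rceil+1\ge 2$. The definition of $A$ says exactly that $\lceil q/(A+1)\rceil\le\beta-1$, hence $q\le(A+1)(\beta-1)$. Everything reduces to the following claim: it suffices to find a query $Q\subseteq B$ with $|Q|=q$ and a set $U$ of indices such that $Q\subseteq\bigcup_{i\in U}S_i$ and $|Q\cap S_i|\le\beta-1$ for every $i\in U$. Indeed, given such $Q$, take any $x\in Q$ and pick $i\in U$ with $x\in S_i$; under the legal colouring $c_i$ the colour class of $x$ inside $Q$ is precisely $Q\cap S_i$, which has fewer than $\beta$ elements, so $x$ is not an $\alpha$-ball of $Q$ under $c_i$ and therefore cannot be the recorded answer to $Q$. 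As this holds for every $x\in Q$, the query $Q$ has no legal answer, contradicting the fact that $Q$ was asked. Hence no such covering of $B$ exists, some ball is a $c_q$-almost $\frac1A$-ball of $B$ under all legal colourings, and outputting it solves the problem.

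To produce such a $Q$ the argument splits according to the size of $m$. If $m\ge q$ we are done at once: let $Q=\{b_{i_1},\dots,b_{i_q}\}$ consist of $q$ of the private elements and $U=\{i_1,\dots,i_q\}$; by privacy $Q\cap S_{i_j}=\{b_{i_j}\}$, of size $1<\beta$. If instead $A+1\le m<q$, the cover has bounded size, and we start from the $m$ private elements $b_1,\dots,b_m$ — for which $|Q\cap S_i|=1$ for every $i$, again by privacy — and adjoin $q-m\le(A+1)(\beta-2)$ further balls, distributing them among the sets $S_i$ so that no $|Q\cap S_i|$ ever exceeds $\beta-1$. Here one exploits that each $S_i$ has size $O(n/A)$, so for $n$ large the sets are ``thin'': no bounded family of them can be intertwined tightly enough to prevent the $q-m$ extra balls from being spread out; sets $S_i$ that are themselves tiny (of size at most $\beta-2$) are automatically harmless, since they can never supply $\beta$ balls to any query, so their elements need no care.

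The step I expect to be the main obstacle is exactly this last placement of the bounded number of extra balls, carried out uniformly over every possible overlap pattern of the $S_i$, together with the calibration of $c_q$ (and of the threshold separating ``tiny'' sets from the rest) so that the greedy/averaging argument goes through for all sufficiently large $n$; everything else is a direct combination of \oref{cover} with the defining inequality $q\le(A+1)(\beta-1)$ of $A$.
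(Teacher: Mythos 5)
Your reduction is clean and correct: you correctly translate the problem of finding a $c_q$-almost $\frac1A$-ball into finding a single query $Q$ that admits no legal answer, using that an $\alpha$-ball of a $q$-set must have colour class of size at least $\beta=\lceil\alpha(q-1)\rceil+1$ inside $Q$, and that $q\le(A+1)(\beta-1)$ by the defining inequality of $A$. Your treatment of the case $m\ge q$ reproduces exactly the paper's argument for $|\cF'|<q$ (pick one private ball from each of $q$ covering sets). The gap, which you yourself flag, is the case $A+1\le m<q$, and it is a genuine one: you need $q$ balls whose intersection with every $S_i$, $i\in U$, is at most $\beta-1$, but you do not show that such a configuration exists. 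The subtlety is that this is \emph{not} a flow/assignment problem: a ball $x$ lying in several of the sets $S_i$ increases $|Q\cap S_i|$ for \emph{every} such $i$, not just for one set it is ``assigned'' to, so the usual Hall/max-flow argument gives a weaker statement than what you need. One can in fact push your counting of singly-covered elements $s_i>n-\sum_{j\ne i}|S_j|$ and of $\sum_i s_i>2n-\sum_i|S_i|$ to cover the boundary case $m=A+1$ and some other ranges for suitable $c_q$, but when $m$ is well above $A+1$ the individual bounds on $s_i$ degenerate for large $n$, and a uniform argument does not fall out of what you wrote. In short, the heart of the proof is missing.

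The paper avoids this combinatorial packing question entirely. After establishing $|\cF'|<q$ by the same private-ball argument you use, it observes that balls lying in the same members of $\cF'$ must share a colour under any colouring where a member of $\cF'$ is a colour class, which partitions $B$ into at most $2^{q-1}$ monochromatic ``groups.'' It then chops these into blocks of size exactly $\lceil\alpha(q-1)\rceil$ and shows that any $q$-set meeting exactly $A+1$ such blocks has its answer shared with a second block of the same colour; this turns the block-level structure into an instance of the $\frac1A$-ball problem with query size $A+1$, which is solved by an iterative removal algorithm (in the spirit of Algorithm~2DB and the argument for \tref{parka}). That route is strictly algorithmic and sidesteps the existence question you leave open; if you want to rescue your static-query approach you would need to prove the packing claim for all $A+1\le m<q$, which is a nontrivial extra step.
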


Finally, we  would like to modify the above model so that it makes sense for $1/2<\alpha \le 1$.
In this case it might happen that there is no $\alpha$-ball in a query. One possibility to deal with this is that the answer to a query $Q$ is an $\alpha$-ball of $Q$ if it exists, and any ball of $Q$ otherwise.
For example, if $q=4$ and $\alpha=2/3$, then if there are three red balls and one blue ball, then the answer is necessarily a red ball, but if there are two balls of each color, the answer is arbitrary.
Interestingly, a ball $b$ is a possible answer for a query $Q$ if and only if $b$ is a $(1-\alpha)$-ball in $Q$.
This way all the above results about $\alpha\le 1/2$ translate to this model, the best that we can hope for is an almost $\frac 1{A}$-ball of the whole set, where $A$ is the largest integer with $\lceil \frac qA\rceil-1\ge (1-\alpha)(q-1)$.

\subsection*{Open problems}

$\bullet$ Parts (i) and (ii) of \tref{lower} show that the minimum value of $\delta_2(\cQ)$ taken over all non-adaptive sets of queries determining a non-minority ball differs from $n/2$ by at most 1, provided $n$ is odd. We conjecture that this holds independently of the parity of $n$ (maybe with a larger constant instead of 1).

\vspace{2mm}

$\bullet$ Does \tref{eveneven} hold for every even $n$?

\vspace{2mm}

$\bullet$ Is it possible to remove the ``almost'' part from \tref{alfa}? (Assuming that the divisibility conditions required by \pref{akkorisezazerosebb} hold.)

\subsection*{Acknowledgement}
We would like to thank all participants of the Combinatorial Search Seminar at the Alfr\'ed R\'enyi Institute of Mathematics for fruitful discussions and the referees for providing useful comments which serve to improve the paper.


\bibliography{majomajorev}
\bibliographystyle{abbrv}

\appendix

\section{Proof of \tref{eveneven}}

\noindent
\textbf{Theorem 16.}
\textit{Let $n\ge q^3$ be even and $q>4$ be even.
If we know a non-minority ball in all ${n \choose q}$ possible $q$-tuples, we can find a non-minority ball.}

\vspace{1mm}

\begin{proof}
We prove by contradiction.

Suppose we cannot find a non-minority ball.
Then by Observation \ref{obs:cover}, we know that the family $\mathcal{F}$ of minority sets of legal colorings covers the full set of balls.

\vspace{2mm}

First note that to get a contradiction, it is enough to show the existence of a set $Q$ with:

\vspace{1mm}

$(\bullet)$ $|Q|=q$ and for every ball $b \in Q$ there is a legal coloring $c(b)$ such that the minority set of that coloring $F_{c(b)}\in \mathcal{F}$ contains $b$ and $|F_{c(b)}\cap Q|<q/2$.

\vspace{1mm}

Indeed, if the answer to a query $Q$ is $x$, then the fact that $F_{c(x)}$
can be a color class together with the fact that $x \in F_{c(x)}$, implies that $F_{c(x)}$ is not in minority in $Q$, i.e., $|F_{c(x)}\cap Q| \ge q/2$. On the other hand $(\bullet)$ implies $|F_{c(x)}\cap Q|<q/2$, a contradiction.


\vspace{2mm}

Now let us consider a minimal subfamily $\mathcal{F}'$ of $\mathcal{F}$ which covers the ball set, i.e., no proper subfamily of $\mathcal{F}'$ covers all the balls.
The minimality of $\mathcal{F}'$ implies that for every $F\in \mathcal{F}'$ there is at least one ball contained in $F$ but in no other member of $\mathcal{F}'$.
For each $F\in \mathcal{F}'$ we call these balls, i.e., the balls that are not contained in other elements of $\mathcal{F}'$, the {\em private balls of $F$ for ${\cal F}'$} and denote them by $pb(F,{\cal F}')$.

If $|{\cal F}'|\ge q$, then we can choose $q$ private balls from $q$ different members of $\mathcal{F}'$. Their set $Q$ obviously satisfies $(\bullet)$, a contradiction. Thus we have $|\mathcal{F}'|<q$.


\vspace{2mm}

If there is a set $F \in \mathcal{F'}$ with $|pb(F,\mathcal{F'})| \le q/2$, then we delete $F$ from $\mathcal{F'}$ and the private balls of $F$ from the set of balls, and we repeat this procedure for the remaining sets and balls until we can. Note that balls that are not private balls at the beginning can become private balls after deleting some sets and also note that every deleted ball is a private ball of some set at some point during the procedure.
Let $\mathcal{F}''$ be the family of all deleted sets and let $m:=|\mathcal{F}''|$, $\mathcal{G}:=\mathcal{F}' \setminus \mathcal{F}''$ and $X:=\cup_{F \in \mathcal{G}} F$. Note that $|X| \ge n-qm/2$ and $|\mathcal{G}| < q-m$ by $|\mathcal{F}'|<q$. Note also that $|F| \le n/2-1$, $q/2 < |pb(F, \mathcal{G})|$ for all $F \in \mathcal{G}$.

\vspace{2mm}

$\mathbf{Case \ 1:}$ First we consider $|\mathcal{G}|\le 1$. In this case set $X$ (whose cardinality is at least $n-qm/2> n-q^2/2$ (as $m<q$) by above), is covered by at most one set of size at most $n/2-1$. This is a contradiction, since the assumptions on $n$ and $q$ imply that $n/2-1 < n - q^2/2$.

\vspace{2mm}

$\mathbf{Case \ 2:}$ Now we consider the case $|\mathcal{G}|=2$ and let $\mathcal{G}:=\{A,B\}$.
The bound $|A\cup B|=|X|\ge n-mq/2 > n-q^2/2$ (using $m<q$) implies $$|A\setminus B|\ge n/2-q^2/2+1 \ \textrm{and} \ |B\setminus A|\ge n/2-q^2/2+1,$$ as members of $\mathcal{F}'$ have cardinality at most $n/2-1$ (and $n$ is even).
Every set $C\in \mathcal{F''}$ intersects either $A \setminus B$ or $B \setminus A$ in at most $n/4$ elements. Thus using that $n/2-q^2/2+1-n/4\ge q/2-1$ by the assumptions on $n$ and $q$, we have that for all $C\in \mathcal{F''}$

$$|(A\setminus B)\setminus C|\ge q/2-1 \ \textrm{or} \ |(B\setminus A)\setminus C|\ge q/2-1.$$

\vspace{1mm}

$\mathbf{Subcase \ 2.1:}$ $|\mathcal{F}''|=1$. Let $C$ be the only set in $\mathcal{F}''$ and without loss of generality we can assume, that $|(A\setminus B)\setminus C|\ge q/2-1$.
Since the size of the union of any two of the sets $A,B$ and $C$ is at most $n-2$ (here we use that $n$ is even), each of the sets has at least 2 private balls for $\mathcal{F}'$.
So put the $q/2-1$ balls of $(A\setminus B)\setminus C$, 2 private balls of $B$ and $C$ for $\mathcal{F}'$, and $q/2-3$ elements from the complement of $A$ into $Q$ (we note that this is a point, where we use the assumption $4<q$).
This $Q$ satisfies $(\bullet)$, since in this case $A \cup B \cup C$ contains all the balls, and $A$ is good minority set for those in $(A\setminus B)\setminus C$, $B$ and $C$ is a good minority set for those that are the private balls (for $\mathcal{F}'$) of $B$ and $C$ (respectively), and finally either $B$ or $C$ is a good minority set for those balls that are in the complement of $A$ and contains that ball.

\vspace{1mm}

$\mathbf{Subcase \ 2.2:}$ $|\mathcal{F}''| \ge 2 $ and there are $C,D\in \mathcal{F''}$ different with $|(A\setminus B)\setminus C|\ge q/2-1$ and $|(B\setminus A)\setminus D|\ge q/2-1$.
We can construct $Q$ by picking $q/2-1$ balls from both of $(A \setminus B)\setminus C$ and $(B \setminus A)\setminus D$, including at least one private ball of $A$ and $B$ for $\mathcal{F}'$, and in addition a private ball of both $C$ and $D$ for $\mathcal{F}'$.
This $Q$ satisfies $(\bullet)$, since $A$ ($B$) is a good minority set for the balls in $(A\setminus B)\setminus C$ ($(B \setminus A)\setminus D$, resp.) and $C$ and $D$ is a good minority sets for their private ball (respectively).

\vspace{1mm}

$\mathbf{Subcase \ 2.3:}$ $|\mathcal{F}''| \ge 2 $ and there are no such $C$ and $D$ as in the previous subcase. Then without loss of generality we can assume that for every $C\in \mathcal{F''}$ we have $|(A\setminus B)\setminus C|\ge q/2-1$ and $|(B\setminus A)\setminus C|\le q/2-2$. 

Then for every $C\in  \mathcal{F''}$ we have $|(B\setminus A)\cap C|\ge n/2-q^2/2-q/2+3$ (using the bound on $|B \setminus A|$), and thus $|(A\setminus B)\cap C|\le q^2/2+q/2-4$ (using that $|C| \le n/2-1$).

\vspace{1mm}

Using our assumptions on $n$ and $q$ we have $(q^2/2+q/2-4)(q-3)+q/2-1 \le n/2-q^2/2+1$ (this is the place, where we really use the cubic lower bound on $n$) and $n/2-q^2/2+1 \le |A\setminus B|$, and we can conclude that the elements of $\mathcal{F''}$ together can cover all but at least $q/2-1$ balls from $A\setminus B$. This means that $A$ contains at least $q/2-1$ private balls for $\mathcal{F}'$. Then we can form $Q$ satisfying $(\bullet)$ by picking $q/2-1$ private balls of $A$ for $\mathcal{F}'$, a private ball of $B$ for $\mathcal{F}'$, a private ball of two different $C,D \in \mathcal{F''}$ for $\mathcal{F}'$, and $q/2-2$ further balls from the complement of $A$.
It is easy to see that this $Q$ satisfies $(\bullet)$ and we are done with $\mathbf{Case \ 2}$.

\vspace{2mm}

$\mathbf{Case \ 3:}$ Finally, if $|\mathcal{G}|\ge 3$,  let us consider 3 sets $A,B$ and $C$ with more than $q/2$ many private balls for $\mathcal{G}$. Then put $q/2-1$ private balls of $A$ and $B$ for $\mathcal{G}$ and 2 private balls of $C$ for $\mathcal{G}$ to form $Q$. This $Q$ easily satisfies $(\bullet)$.

\end{proof}

\section{Proof of \tref{alfa}}

\noindent
\textbf{Theorem 19.}
\textit{If $n$ is large enough, after asking all possible ${n \choose q}$ queries, we can show a {\em $c_q$-almost} $\frac 1{A}$-ball of the full set, where $c_q$ is a constant depending only on $q$.}

\vspace{1mm}

\begin{proof} We prove by contradiction, so suppose this is not the case.
Let $\cF$ be the family of sets that appear as color classes in colorings consistent with all the answers.
Then every ball must be contained in a member of $\cF$ that has less than $\frac nA-c_q$ elements.
As in the proof of \tref{eveneven}, let $\cF'$ be a subfamily of $\cF$ of minimum size that covers every ball, such that all members of $\cF'$ have less than $\frac nA-c_q$ elements.
Now for every $F\in \cF'$ there is a private ball, a ball contained in $F$ but no other member of $\cF'$.
As in \tref{eveneven}, $|\cF'|< q$. Indeed, otherwise we could find a query $Q$ of size $q$ containing only private balls and the answer $b$ for this query would be contained in $F \in \cF$.
But if $F$ would be a color class, then it would intersect $Q$ in one element concluding in a contradiction.

From now on we will consider colorings where a member of $\cF'$ is one of the color classes, and find a $c_q$-almost $\frac 1{A}$-ball of the full set, which is obviously a contradiction. If two balls are contained in exactly the same members of $\cF'$, their colors must be the same. Thus $\cF'$ determines at most $2^{q-1}$ groups such that all the balls in each group have the same color.

We partition each group into groups of size exactly $\lceil \alpha(q-1)\rceil$, throwing away less than $\lceil \alpha(q-1)\rceil$ balls of each original group. Let $\cG$ be the family of the resulting groups, then $|\cG| > \frac{ n-2^{q-1}(\lceil \alpha(q-1)\rceil)}{\lceil \alpha(q-1)\rceil}$.

Let us consider the queries that have a common element with exactly $A+1$ members of $\cG$, and contain no balls thrown away. One can easily see that $(A+1)(\lceil \alpha(q-1)\rceil) \ge q\ge A+1$, thus such queries exist. For these queries the answer is an element of a group, such that there is another group of the same color. Indeed, in a query there are at least $\alpha(q-1)+1$ elements with the color of the answered ball, not all of these fit into one group of size $\lceil \alpha(q-1)\rceil$. Considering the groups as elements of a new underlying set, this means we can ask queries of size $A+1$, and the answer to such a query is a $\frac 1{A}$-ball (or more precisely a $\frac 1{A}$-group) of the $A+1$ elements. If we can find a $\frac 1{A}$-group, it means that there are at least $\frac 1{A}(|\cG|-1)+1$ groups, thus $\frac nA-c_q$ balls, of the same color.

From here, the following algorithm run on the setting where elements correspond to the groups finishes the proof of the theorem.\\

{\bf Algorithm.}
Denote the current set of balls by $B$.
Ask a set of size $A+1$, and temporarily remove the answer from $B$.
Repeat this until only $A$ balls are left, whose set we denote by $K$.
If there are no red balls in $K$, then there can be no red balls in $B\setminus K$.
This means that any ball from $B\setminus K$ is a $\frac 1A$-ball of $B$.
If there is at least one ball of each color in $K$, then any $\frac 1A$-ball of $B\setminus K$ is also a $\frac 1A$-ball of $B$.
Therefore, in both cases, it is enough to solve the problem in $B\setminus K$.
Repeating this, using $O(n^2)$ queries we can achieve that we have $O(q)$ balls.
Notice that any of the remaining balls is a $c_q$-almost $\frac 1A$-ball of the full set.
\end{proof}

Note that the above proof is essentially the combination of the proof of \tref{eveneven} and the algorithm in the proof of \tref{parka}.



\end{document}